\newtheorem{conj}{Conjecture}[section]
\newtheorem{thm}{Theorem}[section]
\newtheorem{rmk}[conj]{Remark}
\newtheorem{lem}[conj]{Lemma}
\newtheorem{prop}[conj]{Proposition}
\newtheorem{defn}[conj]{Definition}
\newtheorem{coro}[conj]{Corollary}
\newcommand{\supp}{\mathrm{supp}}
\newcommand{\R}{\mathbb{R}} 
\newcommand{\Z}{\mathbb{Z}}
\newcommand{\E}{\mathbb{E}}
\def\P{{\mathbb P}}
\begin{document}
\title{Boolean functions: noise stability, non-interactive correlation distillation, and mutual information}
\author{Jiange Li, Muriel M\'edard
\thanks{
These results were presented in part at the 2018 International Symposium on Information Theory, Colorado, USA. J. Li was with the Research Laboratory of Electronics (RLE) at the Massachusetts Institute of Technology (MIT). He is now with the Einstein Institute of Mathematics at the Hebrew University of Jerusalem. M. M\'edard is with the Research Laboratory of Electronics (RLE) at the Massachusetts Institute of Technology (MIT). Email: {\tt jiange.li@mail.huji.ac.il, medard@mit.edu}
}}
\date{\today}
\maketitle


\begin{abstract}
Let $T_{\epsilon}$ be the noise operator acting on Boolean functions $f:\{0, 1\}^n\to \{0, 1\}$, where $\epsilon\in[0, 1/2]$ is the noise parameter. Given $\alpha>1$ and fixed mean $\E f$, which Boolean function $f$ has the largest $\alpha$-th moment $\E(T_\epsilon f)^\alpha$? This question has close connections with noise stability of Boolean functions, the problem of non-interactive correlation distillation, and Courtade-Kumar's conjecture on the most informative Boolean function. In this paper, we characterize maximizers in some extremal settings, such as low noise ($\epsilon=\epsilon(n)$ close to 0), high noise ($\epsilon=\epsilon(n)$ close to 1/2), as well as when $\alpha=\alpha(n)$ is large. Analogous results are also established in more general contexts, such as Boolean functions defined on discrete torus $(\Z/p\Z)^n$ and the problem of noise stability in a tree model. 
\end{abstract}


\section{Introduction}

Let $\epsilon\in[0, 1/2]$ be the noise parameter. Let $T_{\epsilon}$ be the noise operator (Definition \ref{defn:noise}) acting on Boolean functions $f:\{0, 1\}^n\to \{0, 1\}$ on the discrete cube associated with the uniform measure. In this paper, we investigate the problem that given $\alpha> 1$ and fixed mean $\E f$ which Boolean function $f$ has the largest $\alpha$-th moment $\E(T_\epsilon f)^\alpha$. This question has close connections with other problems in information theory and theoretical computer science, such as noise stability of Boolean functions, non-interactive correlation distillation (NICD), as well as the conjecture of Courtade-Kumar on the most informative Boolean function.

\textbf{Noise stability}. The second moment $\E(T_\epsilon f)^2$ is know as nose stability of $f$, in which case $f$ could be an arbitrary real-valued function. We may refer to $\E(T_\epsilon f)^\alpha$ as \emph{$\alpha$-stability}. Suppose that $f$ is a Boolean function with the support $S\subseteq\{0, 1\}^n$. Then we have the following probabilistic interpretation 
\begin{equation}\label{eq:2nd moment}
\E(T_\epsilon f)^2=\P(X\in S, Y\in S).
\end{equation}
Here, $X$ and $Y$ are uniformly random binary sequences with the correlation matrix $\rho I$, where $\rho=(1-2\epsilon)^2$ and $I$ is the identity matrix. This isoperimetric problem has been  extensively studied in the Gaussian setting, in which case $X$ and $Y$ are standard Gaussian vectors with correlation matrix $\rho I$. By the Gaussian isoperimetric inequality \cite{ST74, Bor75}, the probability in \eqref{eq:2nd moment} is maximized when $S$ is a half space, and this was generalized by Isaksson and Mossel \cite{IM12} to more than two Gaussian vectors. The optimality of half spaces was further extended to other functionals, such as $\E(T_\epsilon f)^\alpha$ for $\alpha>1$ by  Eldan \cite{Eld15}, and $\E \Phi(T_\epsilon f)$ for convex functions $\Phi$ by  Kindler, O'Donnell and Witmer \cite{KODW15}. Extremizers of  $\E(T_\epsilon f)^\alpha$ in the discrete setting are yet to be understood.

\textbf{Non-interactive correlation distillation (NICD)}. Our second motivation comes from the problem of NICD in information theory and computer science \cite{MOD05, MODRSS06, Yang07}, which is relevant to cryptographic information reconciliation, random beacons in cryptography and security, and coding theory. 
In its most basic form, the problem of NICD involves two players. Let $X$ be a uniformly random binary string transmitted to Alice and Bob through  independent binary symmetric channels with cross-over probability $\epsilon$  ($\text{BSC}(\epsilon)$).  
Upon receiving $Y$ and $Y'$, Alice and Bob output one bit without communication. Their goal is to maximize the probability that they give the same bit, i.e., $\P(f(Y)=g(Y'))$, where $f$ and $g$ are Boolean functions. Notice that 
\begin{align*}
\E f(Y) &=\P(f(Y)=g(Y')=1)+\P(f(Y)=1, g(Y')=0),\\
\E g(Y') &=\P(f(Y)=g(Y')=1)+\P(f(Y)=0, g(Y')=1).
\end{align*}
We have
$$
\P(f(Y)=g(Y'))=1+2\E f(Y)g(Y')-\E f(Y)-\E g(Y').
$$
Therefore, given $\E f$ and $\E g$, it suffices to maximize $\E f(Y)g(Y')=\E (T_{\epsilon}fT_{\epsilon}g)$.  Similarly, the goal of the $k$-player NICD problem is to maximize $\P(f_1(Y^1)=\cdots=f_k(Y^k))$, where $Y^1, \cdots, Y^k$ are $k$ noise corrupted versions of $X$, and $f_1, \cdots, f_k$ are Boolean functions. In general, this is not equivalent to the maximal correlation $\E\prod_{i=1}^kf_i(Y^i)=\E\prod_{i=1}^kT_\epsilon f_i$. If one is happy with the restriction $f_1=\cdots=f_k=f$, then we have
$$
\P(f(Y^1)=\cdots=f(Y^k))=\E((T_\epsilon f)^k+(1-T_\epsilon f)^k).
$$
In this case, the $k$-player NICD problem can be rephrased as the maximization of $\E\Psi_k(T_\epsilon f)$, where $\Psi_k(x)=x^k+(1-x)^k$ for $x\in[0, 1]$.
(This was also observed in \cite{MOD05}, Remark 1.2). Maximizers of $\E(T_\epsilon f)^k$ often possess certain special structures, which enable that $\E(T_\epsilon f)^k$ and $\E(T_\epsilon(1- f))^k$ often achieve their maximums at the same Boolean function. Hence, it often suffices to study the maximization of $\E(T_\epsilon f)^\alpha$.

\textbf{Courtade-Kumar's conjecture}. Let $X\in\{0, 1\}^n$ be a uniformly random binary sequence. Let $Y$ be the output of $X$ through a $\text{BSC}(\epsilon)$ channel. Which Boolean function $f$ maximizes the mutual information $I(X; f(Y))$ between $X$ and $f(Y)$? This is known as the most informative Boolean function problem raised by Courtade and Kumar \cite{CK14}. They also conjectured that the mutual information $I(X; f(Y))$  is maximized by the dictator function. The analogous question in the Gaussian setting was verified by Kindler, O'Donnell and Witmer \cite{KODW15}. Pichler, Piantanida and Matz \cite{PPM16} proved the variant that the dictator function maximizes the mutual information $I(f(X); g(Y))$ among all Boolean functions $f$ and $g$. The original conjecture is still wide open. Courtade and Kumar \cite{CK14} has observed that their conjecture holds in extremal scenarios $\epsilon=\epsilon(n)\to 0, 1/2$. Quantitative bounds  can be found in \cite{OSW15}. Samorodnitsky \cite{Sam16} proved Courtade-Kumar's conjecture in the high noise setting, i.e., for $\epsilon\in[\epsilon_0, 1/2]$, where $0<\epsilon_0<1/2$ is some absolute constant. We observe that Courtade-Kumar's conjecture is closely related to the $\alpha$-NICD problem, where $\alpha>1$ is not necessarily an integer. More precisely, for fixed $\E f$, if there is a unique (up to isometric equivalence) maximizer $f$ of $\E\Psi_\alpha(T_\epsilon f)$ for $\alpha\in(1, \alpha_0)$, where $\Psi_\alpha(x)=x^\alpha+(1-x)^\alpha$ for $x\in[0, 1]$ and $\alpha_0>1$ can be dimension dependent, then $f$ also maximizes $I(X; f(Y))$ among all Boolean functions with the same expectation $\E f$. Conversely, for fixed $\E f$, if $f$ is the unique (up to isometric equivalence) maximizer  of $I(X; f(Y))$, then it also maximizes $\E\Psi_\alpha(T_\epsilon f)$ for $\alpha\in (1, \alpha_0)$ among all Boolean functions with the same expectation $\E f$. In particular, for balanced Boolean functions, Courtade-Kumar's conjecture holds if the dictator function is the unique maximizer of $\E(T_\epsilon f)^\alpha$ for $\alpha\in(1, \alpha_0)$. This is another motivation for us to study the maximization of  $\E(T_\epsilon f)^\alpha$.

The paper is organized as follows. In Section \ref{sec:noise}, we give a brief account of noise operator and total influence of Boolean functions, and we refer the interested reader to the monograph \cite{OD14} for further information. In Section \ref{sec:main}, we include results in asymptotic settings, such as  low noise ($\epsilon=\epsilon(n)$ close to 0), high noise ($\epsilon=\epsilon(n)$ close to 1/2), as well as when $\alpha=\alpha(n)$ is large. In Section \ref{sec:mutual}, we relate the $\alpha$-NICD problem to Courtade-Kumar's conjecture on the most informative Boolean function. In Section \ref{sec:extensions}, we establish analogous results in more general contexts, such as Boolean functions defined on the discrete torus $(\Z/p\Z)^n$ and the problem of noise stability in a tree model. We conclude the paper with a brief discussion of potential applications and future work in Section \ref{sec:conclusion}.


\section{Noise operator and total influence}\label{sec:noise}

We associate the discrete cube $\{0, 1\}^n$ with the uniform measure $\mu$. The set of functions $W_A(x)=(-1)^{\sum_{i\in A}x_i}, A\subseteq[n]$ forms an orthonormal basis; that is, $\E(W_A)^2=1$ and $\E(W_AW_B)=0$ for $A\neq B$. (The expectation is taken with respect to the reference measure $\mu$. We always omit this when it is clear from the context). 
Any real-valued function $f$ on $\{0, 1\}^n$ has the following Fourier expansion
\begin{equation}\label{eq:fourier-expan}
f(x)=\sum_{A\subseteq[n]}\hat{f}(A)W_A(x),
\end{equation}
where $\hat{f}(A)=\E(fW_A)$ are Fourier coefficients. In particular, one has $\hat{f}(\emptyset)=\E f$. 

\begin{defn}\label{defn:noise}
Let $0\leq \epsilon\leq 1/2$. The noise operator $T_\epsilon$ acts on $f:\{0, 1\}^n\to \R$  as follows
\begin{equation}\label{eq:prob-int}
T_\epsilon f(x)=\E f(x+Z),
\end{equation}
where $Z$ has independent Bernoulli($\epsilon$) coordinates and the addition is modular by 2.
\end{defn}

One can see that $T_\epsilon$ is a convolution operator. To be more precise, we have $T_\epsilon f=\mu_\epsilon\ast f$, where $\mu_\epsilon$ is the distribution of $Z$.  One can also think of the operator $T_\epsilon$ as follows. Let $X\in\{0, 1\}^n$ be a binary sequence selected uniformly at random. Let $Y$ be the output of $X$ through a $\text{BSC}(\epsilon)$ channel.
Then, we have $T_\epsilon f(x)=\E[f(Y)|X=x]$; that is, $T_\epsilon f$ is the average of $f$ over the outputs. As $\epsilon$ grows, the channel becomes more noisy, and the output is more random, and $T_\epsilon f$ becomes more ``regular". In particular, we have $T_0 f=f$ and $T_{1/2}f=\E f$. This regularizing effect can also been seen from the following Fourier expansion
\begin{equation}\label{eq:noise-fourier}
T_\epsilon f(x)=\sum_{A\subseteq[n]}(1-2\epsilon)^{|A|}\hat{f}(A)W_A(x).
\end{equation}

For $1\leq i\leq k$, we define $Y^i=X+Z^i$, where $Z^i$ are independent copies of $Z$. One can check that each pair $(Y^i, Y^j)$ for $i\neq j$ has the correlation matrix $\rho I$, where $\rho=(1-2\epsilon)^2$ and $I$ is the identity matrix. For simplicity, we say that they are {\emph {$\rho$-correlated}}.   Since $Y_i$ are independent given $X$, together with \eqref{eq:prob-int}, the conditioning argument yields
$$
\E\prod_{i=1}^kf_i(Y^i)=\E\prod_{i=1}^kT_\epsilon f_i.
$$
Owing to this relation, our results below will be stated in terms of either LHS or RHS of the above identity. 

The noise operator introduced before can be thought of as a special type of Markov semi-groups of Markov chains on graphs. (In our case, the underlying graph is the discrete cube). Hence, it may be worth to investigate the problem of $\alpha$-stability in more general contexts. To be more precise, let us consider the following simple continuous time Markov chain on a simple connected undirected graph $G=(V, E)$. Each vertex $x\in V$ is associated with an exponential clock, i.e., an exponential random variable with parameter 1. When the clock rings, the chain jumps from the current vertex to its neighbours with equal probability. The transition matrix of this Markov chain is $K=D^{-1}A$, where $A$ is the adjacency matrix and $D$ is the diagonal matrix with $D(x, x)=\text{deg}(x)$ the degree of $x$. The invariant measure of the Markov chain is $\mu(x)=\text{deg}(x)/\sum_{y\in V}\text{deg}(y)$. 
The Markov semi-group $(P_t)_{t\geq0}$ acts on $f: V\to \R$ as follows
\begin{equation}\label{eq:semi-group}
P_tf(x)=e^{-tL}f(x),
\end{equation}
where $L=I-K$ is the Laplacian. For any function $f$, we have $\E(Lf)=0$, where the expectation is taken with respect to the invariant measure of the Markov chain. This follows from differentiating the equation $\E(P_tf)=\E f$ with respect to $t$ at $t=0$. By Jensen's inequality, we have $P_t(\Phi(f))\geq\Phi(P_t f)$ for convex functions $\Phi$. Differentiating this inequality  with respect to $t$ at $t=0$, we have $L(\Phi(f))\geq \Phi'(f)L f$. Therefore, we have
$$
\frac{d}{dt}\E \Phi(P_t f)=\E\Phi'(P_tf)L(P_tf)\leq \E L(\Phi(P_tf))=0,
$$
i.e., $\E \Phi(P_t f)$ is a decreasing function of $t$. We refer the interested reader to the monograph \cite{BGL14} for more general theory of Markov semi-groups. 

An important notation used in the study of Boolean functions is influence. We first introduce the flipping operator $\sigma_i$ defined as follows
\begin{equation}\label{eq:flipping}
\sigma_i(x_1,\cdots, x_i, \cdots, x_n)=(x_1,\cdots, 1-x_i, \cdots, x_n),
\end{equation}
i.e., $\sigma_i$ only changes the value of the $i$-th coordinate.
\begin{defn}\label{defn:influence}
Let $f:\{0, 1\}^n\to\{0, 1\}$ be a Boolean function. The influence of the $i$-th variable $I_i(f)$ is defined as
$$
I_i(f)=\mu(x: f(x)\neq f(\sigma_i(x))).
$$
The total influence $I(f)$ is defined as 
$$
I(f)=\sum_{i=1}^nI_i(f).
$$ 
\end{defn}

We have the following geometrical interpretation of influence in terms of edge boundary. Let $S$ be the support of $f$. The $i$-th direction edge boundary $\partial_iS$ is defined as
$$
\partial_i S=\{(x, \sigma_i(x)): x\in S, \sigma_i(x)\notin S\}.
$$
Two vertices $x, y\in \{0, 1\}^n$ are called adjacent, i.e., $x\sim y$, if and only if their Hamming distance is 1. The edge boundary $\partial S$ is defined as 
$$
\partial S=\{(x, y): x\sim y, x\in S, y\notin S\}.
$$ 
It is easy to see that $\partial S=\cup_{i=1}^n\partial_iS$.  One can check the following identities
\begin{align}
I_i(f) &=\frac{|\partial_i S|}{2^{n-1}},\label{eq:geo-i-influence} \\
I(f) &=\frac{|\partial S|}{2^{n-1}}. \label{eq:geo-influence}
\end{align}

We also have the following Fourier analytic representation of influence. Since $f$ takes values 0 or 1, one can rewrite $I_i(f)$ as
$$
I_i(f)=\E(f(X)-f(\sigma_i(X)))^2,
$$
where $X\in\{0, 1\}^n$ is a uniformly random binary string. Using the Fourier expansion \eqref{eq:fourier-expan}, we have
$$
I_i(f)=\E\Big(2\sum_{A\ni i}\hat{f}(A)W_A(x)\Big)^2=4\sum_{A\ni i}\hat{f}(A)^2,
$$
and
$$
I(f)=4\sum_{A\subseteq[n]}|A|\hat{f}(A)^2.
$$


\section{Main results}\label{sec:main}

For the problem of $k$-player correlation, the following statement asserts that the players should use the same strategy to maximize their correlation. 

\begin{prop}\label{prop:holder}
Let $0<\rho<1$. Let $Y^1, \cdots, Y^k\in\{0, 1\}^n$ be $\rho$-correlated uniformly random binary strings. For any functions $f_i:\{0, 1\}^n\to [0, \infty)$, we have
$$
\E \prod_{i=1}^k f_i(Y^{i})\leq\max_{1\leq i\leq k}\E\prod_{j=1}^kf_i(Y^j).
$$
Equality is achieved if and only if $f_i$ are identical.
\end{prop}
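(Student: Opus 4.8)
The inequality is H\"older's inequality in disguise (which is presumably the reason for the name), and the first move is to strip away the probabilistic wrapping. Realize the $\rho$-correlated strings as $Y^i = X + Z^i$ with $X$ uniform on $\{0,1\}^n$ and $Z^1,\dots,Z^k$ independent Bernoulli$(\epsilon)$ vectors, $\rho=(1-2\epsilon)^2$. Conditioning on $X$ and using \eqref{eq:prob-int} exactly as in the identity $\E\prod_i f_i(Y^i)=\E\prod_i T_\epsilon f_i$ already recorded above gives
$$
\E\prod_{i=1}^k f_i(Y^i)=\E_X\prod_{i=1}^k (T_\epsilon f_i)(X),\qquad \E\prod_{j=1}^k f_i(Y^j)=\E_X\bigl((T_\epsilon f_i)(X)\bigr)^k .
$$
Writing $g_i:=T_\epsilon f_i$, the entire statement reduces to $\E_X\prod_i g_i\le\max_i\E_X g_i^{\,k}$, together with its equality case.

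Next I would apply H\"older's inequality on $(\{0,1\}^n,\mu)$ with the $k$ conjugate exponents all equal to $k$, namely $\E_X\prod_i g_i\le\E_X\prod_i|g_i|\le\prod_i\bigl(\E_X|g_i|^k\bigr)^{1/k}$, and then dominate the geometric mean by its largest factor, $\prod_i a_i^{1/k}\le\max_i a_i$ for nonnegative reals $a_i$. Since the relevant $f_i$ are nonnegative (Boolean, in the motivating case), $g_i=T_\epsilon f_i\ge 0$, so $|g_i|^k=g_i^k$ and $\E_X|g_i|^k=\E\prod_j f_i(Y^j)$, which closes the inequality; for even $k$ the same works for arbitrary real $f_i$ with no sign hypothesis. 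An equivalent and even cheaper route skips H\"older entirely: by the pointwise AM--GM bound $\prod_i g_i(X)\le\frac1k\sum_i g_i(X)^k$ (valid because $g_i\ge 0$), integrate and use $\frac1k\sum_i\E g_i^k\le\max_i\E g_i^k$.

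For the equality case I would read the conditions off the two steps just used: equality in H\"older forces the $|g_i|^k$, hence the $g_i$, to be pairwise proportional, and equality in ``geometric mean $\le$ maximum'' forces the proportionality factors to have modulus one, so all the $g_i$ agree up to a consistent sign; since $T_\epsilon$ is invertible for $0<\epsilon<1/2$ (its Fourier multipliers $(1-2\epsilon)^{|A|}$ never vanish), this pulls back to the $f_i$ being constant multiples of a single function, and the converse is a direct check. I expect the genuinely delicate point to be exactly this equality analysis together with the attendant sign bookkeeping that allows one to replace $|T_\epsilon f_i|^k$ by $(T_\epsilon f_i)^k$; the inequality itself is soft once the conditioning reduction is in place.
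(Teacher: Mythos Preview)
Your proposal is correct and follows essentially the same approach as the paper: realize the $\rho$-correlated strings via $Y^i=X+Z^i$, condition on $X$ to reduce to $\E\prod_i T_\epsilon f_i$, apply H\"older with equal exponents $k$, bound the geometric mean by the maximum, and use invertibility of $T_\epsilon$ for the equality case. Your treatment of the sign issue (noting that $f_i\ge 0$ or $k$ even is needed to identify $\E|T_\epsilon f_i|^k$ with $\E\prod_j f_i(Y^j)$) and your parsing of the equality conditions are in fact slightly more careful than the paper's terse version.
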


\begin{proof}
As shown before, we can realize $Y^i$ as $X+Z^i$, where $X$ is a uniformly random binary string, and the coordinates of $Z^i$ are i.i.d. Bernoulli($\epsilon$) with $\epsilon=(1-\sqrt{\rho})/2$. Since $Y^i$ are independent given $X$, we have
\begin{align*}
\E\prod_{i=1}^k f_i(Y^{i}) 
&=\E \prod_{i=1}^k \E[f_i(Y^{i})|X]\\
&\leq\prod_{i=1}^k (\E(\E[f_i(Y^{i})|X])^k)^{1/k}\\
&=\prod_{i=1}^k\Big(\E\prod_{j=1}^k\E[f_i(Y^j)|X]\Big)^{1/k}\\
&=\prod_{i=1}^k\Big(\E\prod_{j=1}^kf_i(Y^j)\Big)^{1/k}\\
&\leq\max_{1\leq i\leq k}\E\prod_{j=1}^kf_i(Y^j).
\end{align*}
The first inequality follows from H\"{o}lder's inequality and equality is achieved if and only if $\E[f_i(Y^{i})|X]$ are multiples of the same function. Since the noise operator is invertible, $f_i$ are also multiples of the same function. The equality case in the second inequality requires that these multiples are the same. This concludes the proof.
\end{proof}

We call two subsets $A, B\subseteq \{0, 1\}^n$ {\emph {isometrically equivalent}} if $B=\phi(A+a)$ for some permutation $\phi$ on $[n]$ and some $a\in\{0, 1\}^n$. Here, the subset $\phi(A)$ is obtained from $A$ by applying $\phi$ to coordinates of all vectors in $A$. Two Boolean functions are called isometrically equivalent if their supports are isometrically equivalent. (This isometric equivalence was also defined in \cite{MOD05}, $\pi_S$ borrowing the notation therein, although they did not call it in this way). Let $f$ and $g$ be isometrically equivalent Boolean functions with supports $A$ and $B=\phi(A+a)$, respectively. One can check that $T_\epsilon g(x)=T_\epsilon f(\phi^{-1}(x)+a)$, where $\phi^{-1}$ is the inverse permutation of $\phi$. This implies that the functional $\E(T_\epsilon f)^\alpha$ is isometrically invariant. Hence, our results on extremal Boolean functions throughout the paper always hold up to isometric equivalence.

A subset $A\subseteq\{0, 1\}^n$ is called a {\emph{lexicographic}} set if it is the initial segment of $\{0, 1\}^n$ labelled in the lexicographic ordering. For example, $A=\{(0, 0, 0), (0, 0, 1), (0, 1, 0), (0, 1, 1)\}$ is the lexicographic set of $\{0, 1\}^3$ with 4 elements. We call a Boolean function lexicographic if its support is a lexicographic set. A well-known result of Harper \cite{Har64} asserts that the sets with the least edge boundary among all subsets of $\{0, 1\}^n$ of fixed size are isometrically equivalent to the lexicographic set. Owing to the connection between total influence and edge boundary, Harper's theorem is equivalent to that, up to isometric equivalence, the lexicographic function is the unique minimizer of total influence among all Boolean functions with fixed mean. 

\begin{thm}\label{thm:high-low}
Let $\alpha>1$. Let $\E f$ be fixed. When $\epsilon=\epsilon(n)$ is sufficiently small, the quantity $\E(T_\epsilon f)^\alpha$ is maximized by the lexicographic function. When $\epsilon=\epsilon(n)$ is sufficiently close to $1/2$,  the quantity $\E(T_\epsilon f)^\alpha$ is maximized by some Boolean function with the largest degree-1 Fourier weight $W_1(f)=\sum_{i=1}^n\hat{f}(\{i\})^2$. Moreover, if $f$ is assumed to be balanced, i.e., $\P(f=0)=\P(f=1)$, the dictator function $f(x)=x_1$ maximizes $\E(T_\epsilon f)^\alpha$ in both scenarios. 
\end{thm}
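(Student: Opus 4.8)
The plan is to analyze the two regimes separately via the Fourier expansion $T_\epsilon f = \sum_A (1-2\epsilon)^{|A|}\hat f(A) W_A$, writing $\rho = 1-2\epsilon$ so that $T_\epsilon f = \E f + \rho \sum_{|A|=1}\hat f(\{i\}) W_{\{i\}} + O(\rho^2)$ termwise. In both regimes I would expand $\E(T_\epsilon f)^\alpha$ around a ``trivial'' value and read off which Fourier data governs the leading correction; the point is that as $\epsilon \to 1/2$ the function $T_\epsilon f$ concentrates near the constant $\E f$, while as $\epsilon \to 0$ it concentrates near $f$ itself (which is $\{0,1\}$-valued).

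\textbf{Strong noise regime.} Set $\mu = \E f$ and write $T_\epsilon f = \mu + g$ where $g = \sum_{A\neq\emptyset}\rho^{|A|}\hat f(A)W_A$ has $\E g = 0$ and $\|g\|_\infty \to 0$ as $\rho\to 0$ uniformly (there are only finitely many Boolean functions on $\{0,1\}^n$ for fixed $n$, so all estimates are uniform in $f$). Taylor-expanding $t\mapsto (\mu+t)^\alpha$ at $t=0$ gives
\begin{align*}
\E(T_\epsilon f)^\alpha = \mu^\alpha + \alpha\mu^{\alpha-1}\E g + \frac{\alpha(\alpha-1)}{2}\mu^{\alpha-2}\E g^2 + O(\|g\|_\infty^3).
\end{align*}
The linear term vanishes, and $\E g^2 = \sum_{A\neq\emptyset}\rho^{2|A|}\hat f(A)^2 = \rho^2 W_1(f) + O(\rho^4)$ where $W_1(f) = \sum_{i=1}^n \hat f(\{i\})^2$ is the degree-1 Fourier weight (here I use $\mu > 0$, which may be assumed since the degenerate cases $\E f \in \{0,1\}$ are trivial; if $\mu$ can make $\alpha(\alpha-1)\mu^{\alpha-2}=0$ only when $\alpha=1$, excluded by hypothesis). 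Since the $O(\rho^4)$ remainder from $\E g^2$ and the $O(\rho^3)$ cubic remainder are dominated by the $\rho^2$ term once $\epsilon$ is close enough to $1/2$ (using finiteness of the function class to get a uniform gap), the maximizer of $\E(T_\epsilon f)^\alpha$ among functions with mean $\mu$ is exactly the maximizer of $W_1(f)$. For balanced $f$ one has $\mu = 1/2$ and, by the standard fact that $\sum_i I_i(f) \geq \sum_i \hat f(\{i\})^2 \cdot(\text{something})$ — more directly, $W_1(f) \le \frac14$ with equality iff $f$ is a dictator — the dictator is optimal; I would cite the elementary argument that $\sum_{i}\hat f(\{i\})^2 \le \max_i |\hat f(\{i\})| \cdot \sum_i |\hat f(\{i\})|$ together with $|\hat f(\{i\})| \le \min(\mu, 1-\mu)$ and Parseval, or simply invoke the known extremal characterization.

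\textbf{Weak noise regime.} Here I would use that $f$ is $\{0,1\}$-valued, so $f^\alpha = f$ and more usefully $T_\epsilon f \to f$ pointwise. Write $T_\epsilon f(x) = f(x) + \delta(x)$ where $\delta(x) = \E[f(x+Z) - f(x)]$; for small $\epsilon$, $\delta(x) = \epsilon\big(\#\{i : f(\sigma_i x)\neq f(x),\ f(\sigma_i x) > f(x)\} - \#\{i: f(\sigma_i x)\neq f(x),\ f(\sigma_i x) < f(x)\}\big) + O(\epsilon^2)$. Expanding $(f(x)+\delta(x))^\alpha$: when $f(x)=0$ we get $\delta(x)^\alpha = O(\epsilon^\alpha)$ (negligible compared to the $\epsilon$-order main term since $\alpha > 1$), and when $f(x)=1$ we get $1 + \alpha\delta(x) + O(\epsilon^2)$. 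Summing, $\E(T_\epsilon f)^\alpha = \mu + \alpha\,\E[\mathbbm 1_{f=1}\delta] + O(\epsilon^{\min(2,\alpha)})$. A short computation identifies $\E[\mathbbm 1_{f=1}\delta]$: each boundary edge $(x,y)$ with $x\in S, y\notin S$ contributes $-\epsilon/2^n$, so $\E[\mathbbm 1_{f=1}\delta] = -\epsilon\,|\partial S|/2^n = -\tfrac{\epsilon}{2} I(f)$ by \eqref{eq:geo-influence}. Hence $\E(T_\epsilon f)^\alpha = \mu - \tfrac{\alpha\epsilon}{2}I(f) + o(\epsilon)$, and maximizing the $\alpha$-moment is, for $\epsilon$ small (uniformly, by finiteness), the same as \emph{minimizing} the total influence $I(f)$ — which by Harper's theorem (in the influence form stated just before the theorem) is achieved by the lexicographic function. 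For balanced $f$, the minimum-influence balanced function is the dictator (this is the $\mu = 1/2$ case of Harper, where the lexicographic half-cube is a subcube, i.e., a dictator).

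\textbf{Main obstacle.} The real work is controlling the error terms uniformly so that ``sufficiently small $\epsilon$'' and ``sufficiently close to $1/2$'' are legitimate: since $n$ is fixed there are finitely many Boolean functions, so one argues that the leading-order functional ($I(f)$ in one regime, $W_1(f)$ in the other) strictly separates its optimizer from all non-optimizers by a positive gap, and then chooses $\epsilon(n)$ small/large enough that the remainder is below that gap for every $f$. The delicate point is the case $\alpha < 2$ in the weak-noise regime, where the contribution from $\{f = 0\}$ is $O(\epsilon^\alpha)$ rather than $O(\epsilon^2)$: it is still $o(\epsilon)$ since $\alpha > 1$, so the argument survives, but the bookkeeping must be done carefully rather than by blithely Taylor-expanding to second order. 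A secondary subtlety is handling ties — when several functions share the optimal value of $I(f)$ (resp. $W_1(f)$), one only concludes the maximizer lies in that set, matching the theorem statement which says ``maximized by Boolean functions with the maximal degree-1 Fourier weight.''
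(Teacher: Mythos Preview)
Your proposal is correct and follows essentially the same approach as the paper: a perturbative analysis at the two endpoints $\epsilon=0$ and $\epsilon=1/2$, identifying the leading-order coefficient as $-\tfrac{\alpha}{2}I(f)$ (via Harper) in the weak-noise regime and as a positive multiple of $\sum_i\hat f(\{i\})^2$ in the strong-noise regime. The only stylistic difference is that the paper packages the weak-noise computation as $\tfrac{d}{d\epsilon}\E(T_\epsilon f)^\alpha\big|_{\epsilon=0} = 2\alpha\,\E(fLf) = -\tfrac{\alpha}{2}I(f)$ through the Fourier formula for $L$, whereas you expand $(f+\delta)^\alpha$ pointwise using Booleanity and count boundary edges directly; these are equivalent computations of the same first-order term, and your explicit treatment of the uniform error control (via finiteness of the function class) and of the $O(\epsilon^\alpha)$ contribution from $\{f=0\}$ when $1<\alpha<2$ is, if anything, more careful than what the paper writes out.
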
 
\begin{proof}
As mentioned in the paragraph below equation \eqref{eq:prob-int}, one can interpret $T_\epsilon$ as taking conditional expectation. Then, Jensen's inequality implies that for any convex function $\Phi$, the functional $\E\Phi(T_\epsilon f)$ is decreasing with respect to $\epsilon\in[0, 1/2]$. In particular, $\E(T_\epsilon f)^\alpha$ is a decreasing function of $\epsilon$ when $\alpha>1$.  Notice that the initial value $\E(T_{0} f)^\alpha=\E f$ is fixed. Hence, $\E(T_\epsilon f)^\alpha$ with smaller decreasing rates will have larger values.
Differentiating the function $\E(T_\epsilon f)^\alpha$ with respect to $\epsilon$, we have
\begin{equation}\label{eq:derivative}
\frac{d}{d\epsilon}\E(T_\epsilon f)^\alpha=2\alpha(1-2\epsilon)^{-1}\E(T_\epsilon f)^{\alpha-1}(L\circ T_\epsilon) f,
\end{equation}
where the operator $L$ is defined as
\begin{equation}\label{eq:infi-opr}
Lf(x)=-\sum_{A\subseteq[n]}|A|\hat{f}(A)W_A(x).
\end{equation} 
In particular, we have
\begin{align}\label{eq:der-0}
\frac{d}{d\epsilon}\E(T_\epsilon f)^\alpha\Big|_{\epsilon=0} &= 2\alpha\E (f^{\alpha-1}Lf)= 2\alpha\E (fLf)\nonumber\\
&=-2\alpha\sum_{A\subseteq [n]}|A|\hat{f}(A)^2 \notag\\
&=-\frac{\alpha}{2}\cdot I(f),
\end{align}
where $I(f)$ is the total influence of $f$ (Definition \ref{defn:influence}). Harper's theorem \cite{Har64} and \eqref{eq:der-0} imply that  the lexicographic function is the unique maximizer of $\frac{d}{d\epsilon}\E(T_\epsilon f)^\alpha\big|_{\epsilon=0}$. Hence, the lexicographic function maximizes $\E(T_\epsilon f)^\alpha$ when $\epsilon=\epsilon(n)$ is sufficiently small.

When $\epsilon=\epsilon(n)$ is sufficiently close to $1/2$, we can prove the statement in a similar manner. Notice that $\E(T_{1/2} f)^\alpha=(\E f)^{\alpha}$ is fixed as the ending value. Again, using the fact that $\E(T_\epsilon f)^\alpha$ is decreasing, functions $\E(T_\epsilon f)^\alpha$ decreasing faster  will have larger values around $\epsilon=1/2$. In this case, we have
$$
T_\epsilon f(x)=\E f+(1-2\epsilon)\sum_{i=1}^n\hat{f}(\{i\})(-1)^{x_i}+O((1-2\epsilon)^2).
$$
Then we have
\begin{align}
(L\circ T_\epsilon) f(x) &=-(1-2\epsilon)\sum_{i=1}^n\hat{f}(\{i\})(-1)^{x_i}+O((1-2\epsilon)^2), \label{eq:1}\\
(T_\epsilon f(x))^{\alpha-1} &=(\alpha-1)(\E f)^{\alpha-2}(1-2\epsilon)\sum_{i=1}^{n}\hat{f}(\{i\})(-1)^{x_i}+(\E f)^{\alpha-1}+O((1-2\epsilon)^2). \label{eq:2}
\end{align}
The terms $O(\cdot)$ in \eqref{eq:1} and \eqref{eq:2} have zero mean. Then \eqref{eq:derivative}, \eqref{eq:1} and \eqref{eq:2} yield
\begin{equation}\label{eq:3}
\frac{d}{d\epsilon}\E(T_\epsilon f)^\alpha=-2\alpha(\alpha-1)(1-2\epsilon)(\E f)^{\alpha-2}\sum_{i=1}^n\hat{f}(\{i\})^2+O((1-2\epsilon)^2).
\end{equation}
(The quantity $O(\cdot)$ in \eqref{eq:3} is a sum of finite terms involving $n, \epsilon, \alpha$ and $\hat{f}(A)$, $A\subseteq [n]$. Parseval's identity $\sum_{A}\hat{f}^2(A)=\E f^2=\E f$ implies the crude estimate $|\hat{f}(A)|\leq \sqrt{\E f}$. Given that $\E f$ is fixed, we can use this crude estimate to derive a uniform (with respect to $f$) bound on $O(\cdot)$ in \eqref{eq:3} in terms of $n, \epsilon, \alpha, \E f$). Hence, for $\epsilon=\epsilon(n)$ sufficiently close to 1/2, the function $\E(T_\epsilon f)^\alpha$ decreases faster if the degree-1 Fourier weight $W_1(f)=\sum_{i=1}^n\hat{f}(\{i\})^2$ is larger. Hence, for $\epsilon=\epsilon(n)$ close to 1/2, 
$\E(T_\epsilon f)^\alpha$ is maximized by some function $f$ with the largest degree-1 Fourier weight. This concludes the proof of the high noise case. 

Suppose that $f$ is a balanced Boolean function. Then, it is clear that the lexicographic function is just the dictator function. The function $2f-1$ is a balanced Boolean function on $\{-1, 1\}$. We have $W_1(f)=\frac{1}{4}W_1(2f-1)$ and $W_1(2f-1)$ is maximized when all Fourier weights of $2f-1$ are on degree 1. This occurs if and only if $2f-1=(-1)^{x_i}~\text{or}~-(-1)^{x_i}$ for some $i\in[n]$ (see, e.g., Proposition 2.50 of \cite{OD14}). This is equivalent to that $f$ is the dictator function up to isometric equivalence. This proves the statement for balanced Boolean functions.
\end{proof}

\begin{rmk}\label{rmk:nicd-noise-stability}
Recall that $\Psi_\alpha(x)=x^\alpha+(1-x)^\alpha$ for $\alpha>1$ and $x\in[0, 1]$. One can check that $\Psi_\alpha(x)$ is convex. Hence, $\E\Psi_\alpha(T_\epsilon f)$ is decreasing with respect to $\epsilon\in[0, 1/2]$. The same argument as before can be used to show that the statement of Theorem \ref{thm:high-low} also holds for $\E\Psi_\alpha(T_\epsilon f)$. 
In the low noise case, one can check that 
$$
\frac{d}{d\epsilon}\E(T_\epsilon (1-f))^\alpha\Big|_{\epsilon=0}=2\alpha\E ((1-f)^{\alpha-1}L(1-f)) =2\alpha\E (fLf)=-\frac{\alpha}{2}\cdot I(f),
$$
which, together with \eqref{eq:der-0}, yields that
$$
\frac{d}{d\epsilon}\E\Psi_\alpha(T_\epsilon (1-f))\Big|_{\epsilon=0}=-\alpha I(f).
$$
Analogous to \eqref{eq:3}, we have that for $\epsilon=\epsilon(n)$ close to 1/2,
$$
\frac{d}{d\epsilon}\E(T_\epsilon (1-f))^\alpha=-2\alpha(\alpha-1)(1-2\epsilon)(1-\E f)^{\alpha-2}\sum_{i=1}^n\hat{f}(\{i\})^2+O((1-2\epsilon)^2).
$$
This, together with \eqref{eq:3}, yields that for $\epsilon=\epsilon(n)$ close to 1/2, 
$$
\frac{d}{d\epsilon}\E\Psi_\alpha(f)=-2\alpha(\alpha-1)(1-2\epsilon)\big[(\E f)^{\alpha-2}+(1-\E f)^{\alpha-2}\big]\sum_{i=1}^n\hat{f}(\{i\})^2+O((1-2\epsilon)^2).
$$
\end{rmk}

As a consequence of Remark \ref{rmk:nicd-noise-stability}, we have the following result on the $k$-player NICD problem, which was proved by Mossel and O'Donnell \cite{MOD05} for balanced Boolean functions. (The assumption that functions are balanced does not seem to be used in their proof).

\begin{coro}\label{coro:app-nicd}
Let $0<\rho<1$. Let $Y^1, \cdots, Y^k\in\{0, 1\}^n$ be $\rho$-correlated uniformly random binary strings. Let $\E f$ be fixed. When $\rho=\rho(n)$ is sufficiently close to 1, the agreement probability $\P(f(Y^1)=\cdots=f(Y^k))$ is maximized by the lexicographic function. When $\rho=\rho(n)$ is sufficiently small, the agreement probability $\P(f(Y^1)=\cdots=f(Y^k))$ is maximized by some Boolean function $f$ with the largest degree-1 Fourier weight. 
\end{coro}

\begin{proof}
The statement readily follows from Remark \ref{rmk:nicd-noise-stability} and the following equation
$$
\P(f(Y^1)=\cdots=f(Y^k))=\E\Psi_k(T_\epsilon f).
$$
\end{proof}

We have the following heuristic for the two-player case when $\rho$ is close to 1. Suppose that $f$ is supported on $S$. Our goal is to maximize $\P(Y^1\in S, Y^2\in S)$, which is equivalent to the minimization of $\P(Y^1\in S, Y^2\notin S)$. Since $Y^1$ and $Y^2$ are $\rho$-correlated, we can think of $Y^2$ as obtained from $Y^1$ by flipping its coordinates independently with probability $(1-\rho)/2$.  When $\rho$ is close to 1, with high probability, $Y^1$ and $Y^2$ will differ by one bit, i.e., $(Y^1, Y^2)$ belongs to the edge boundary. Then smaller edge boundary implies larger agreement probability. Harper's theorem \cite{Har64} asserts that the lexicographic set has the least boundary among all sets with fixed size. Hence, the probability $\P(Y^1\in S, Y^2\in S)$ is maximized by the lexicographic function.

It is well-known that to determine  maximizers of degree-1 Fourier weight among Boolean functions with fixed mean is a hard question, and it is a folklore fact that the indicator of a Hamming ball is superior to the lexicographic function when the mean is sufficiently small (see e.g., \cite{KODW15}).  Let us include the explicit calculation below. Suppose that $\E f=2^{-m}$ for $1\leq m\leq n$. The lexicographic function $f(x)=\prod_{i=1}^mx_i$ is supported on a sub-cube $S$. 
Let $\rho=(1-2\epsilon)^2$. We have
$$
\P(f(Y^1)=f(Y^2))=4^{-n}(1+\rho)^{m}|S|.
$$
When $|S|$ is small, we let $g$ be a Boolean function supported on a vertex and $|S|-1$ vertices with Hamming distance 1 from that vertex. Elementary calculations yield
$$
\P(g(Y^1)=g(Y^2))=
4^{-n}(1+\rho)^{n-2}[(1-\rho)^2|S|^2+4\rho(2-\rho)|S|-4\rho].
$$
Then we have
\begin{align*}
\frac{d}{d\rho}\P(f(Y^1)=f(Y^2))\big|_{\rho=0} &=4^{-n}m|S|^2,\\
\frac{d}{d\rho}\P(g(Y^1)=g(Y^2))\big|_{\rho=0} &=4^{-n}[(n-4)|S|^2+8|S|-4].
\end{align*}
For $n-\log_2n<m\leq n-4$, we have
$$
\frac{d}{d\rho}\P(f(Y^1)=f(Y^2))\big|_{\rho=0}<\frac{d}{d\rho}\P(g(Y^1)=g(Y^2))\Big|_{\rho=0}.
$$
This implies that $\P(f(Y^1)=f(Y^2))<\P(g(Y^1)=g(Y^2))$ for small $\rho>0$.

Among balanced functions, the dictator function maximizes $\E(T_\epsilon f)^2$ at any noise level (see e.g., \cite{OD14}, Proposition 2.50). For balanced functions, we have
$$
\E f(Y^1)f(Y^2)=\E (1-f(Y^1))(1-f(Y^2)).
$$
Therefore, we have
$$
\P(f(Y^1)=f(Y^2))=2\E f(Y^1)f(Y^2),
$$
which is maximized by the dictator function. Similarly, we have 
$$
\P(f(Y^1)=f(Y^2)=f(Y^3))=3\E f(Y^1)f(Y^2)-1/2.
$$
Therefore, the dictator function is still the best strategy in the three-player case. This recovers Theorem 1.3 in \cite{MOD05}. We do not know if the dictator function also maximizes $\E(T_\epsilon f)^3$ among balanced Boolean functions.

We define the natural partial order relation on $\{0, 1\}^n$  as $x\preceq y$ if $x_i\leq y_i$ holds for all $i\in[n]$. A real-valued function $f$ on  $\{0, 1\}^n$ is called {\emph {monotone increasing}} if $f(x)\leq f(y)$, whenever $x\preceq y$, and $f$ is called {\emph {monotone decreasing}} if $f(x)\geq f(y)$, whenever $x\preceq y$. In both cases, we call the functions {\emph {monotone}}. 

\begin{thm}\label{thm:mono}
Let $\Phi$ be a convex function. For fixed mean $\E f$, the quantity $\E\Phi(T_\epsilon f)$ is maximized by some monotone function. 
\end{thm}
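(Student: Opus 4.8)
The plan is to use a symmetrization (compression/shifting) argument. The idea is that one can transform an arbitrary Boolean function $f$ into a monotone one without decreasing $\E\Phi(T_\epsilon f)$, by repeatedly applying coordinate-wise ``shifts'' that push the support of $f$ toward the upper set $\{x: x_i=1\}$ in each direction. This is the Boolean-cube analogue of the monotone rearrangement / two-point symmetrization technique familiar from isoperimetry.

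More precisely, for each coordinate $i$ define the \emph{$i$-shift} $f \mapsto f^{(i)}$ as follows: for each pair $\{x, \sigma_i(x)\}$ differing only in coordinate $i$, set $f^{(i)}$ equal to $1$ on the point with $x_i=1$ and $0$ on the point with $x_i=0$ if exactly one of $f(x), f(\sigma_i(x))$ equals $1$; otherwise leave the values unchanged. This preserves $\E f$. The first key step is to show $\E\Phi(T_\epsilon f^{(i)}) \geq \E\Phi(T_\epsilon f)$. Since $T_\epsilon$ acts as a tensor product of one-dimensional noise kernels, and the $i$-shift only rearranges values along coordinate-$i$ fibers, it suffices to prove a two-point lemma: for the two-point space, if $K = \begin{pmatrix} 1-\epsilon & \epsilon \\ \epsilon & 1-\epsilon \end{pmatrix}$ and we look at a function on $\{0,1\}\times\{0,1\}^{n-1}$, then replacing the fiber $(g(0,y), g(1,y))$ by its sorted version $(g^*(0,y), g^*(1,y)) = (\min, \max)$ does not decrease $\E\Phi(T_\epsilon g)$. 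This two-point inequality reduces, after expanding the action of $T_\epsilon$ in the $i$-th coordinate, to comparing $\Phi(a) + \Phi(b)$ against $\Phi(a') + \Phi(b')$ where $(a,b)$ and $(a',b')$ have the same sum but $(a',b')$ is a ``spread out'' version (majorizes) of $(a,b)$ — this follows from convexity of $\Phi$ together with the fact that the noise operator in the other coordinates is a positive averaging operator, so it preserves the relevant pointwise ordering. One has to be slightly careful because $T_\epsilon$ mixes the $i$-th coordinate with the rest, but writing $T_\epsilon = T_\epsilon^{(i)} \otimes T_\epsilon^{(\hat i)}$ and using that $T_\epsilon^{(\hat i)}$ is an average of Dirac masses lets the comparison be done fiber-by-fiber.

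The second step is to iterate. Starting from any $f$, apply $1$-shift, then $2$-shift, \dots, then $n$-shift, then repeat. Each shift weakly increases $\E\Phi(T_\epsilon f)$ and preserves $\E f$. The standard potential-function argument (e.g., tracking $\sum_x x_1 \cdots$ — more cleanly, $\sum_{x} w(x) f(x)$ for a strictly ``upward-biased'' weight $w$, such as $w(x) = \sum_i 2^i x_i$ or $w(x) = \sum_i x_i$ with a tie-breaking refinement) shows that the process stabilizes after finitely many rounds, because the state space is finite and the potential is strictly monotone under any nontrivial shift. A function that is fixed by every $i$-shift is monotone: if $f$ were not monotone there would be $x \prec y$ with $f(x)=1, f(y)=0$, and walking from $x$ to $y$ one coordinate at a time produces some adjacent pair violating the shifted configuration, so some $i$-shift would act nontrivially. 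Hence the limit function is monotone and has the same mean and at least as large a value of $\E\Phi(T_\epsilon f)$.

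\textbf{The main obstacle} I anticipate is the two-point lemma in Step 1 — specifically, making rigorous the claim that the $i$-shift does not decrease $\E\Phi(T_\epsilon f)$ when $T_\epsilon$ genuinely couples coordinate $i$ with the others. The cleanest route is: fix the values in all coordinates except $i$, so that $T_\epsilon f(x)$ becomes, as a function of $x_i$, an affine function $c(y) + (-1)^{x_i} d(y)$ of the $i$-th bit where $c,d$ are obtained by applying $T_\epsilon$ in the remaining coordinates; then show that the $i$-shift, which sorts each $(f(0,y'), f(1,y'))$ pair \emph{before} $T_\epsilon^{(\hat i)}$ is applied, has the effect of making $d(y)$ pointwise nonnegative (aligning it with the sign of $(-1)^{x_i}$ the "right way"), and conclude via convexity and Jensen that averaging $\Phi(c(y) \pm d(y))$ over $y$ only increases when the $\pm$ is optimally aligned. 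Care is needed because sorting the raw $f$-values and then averaging is not obviously the same as aligning $d$ pointwise; one resolves this by noting $T_\epsilon^{(\hat i)}$ is monotone and linear, so it commutes appropriately with the coordinatewise max/min, which is exactly the content that requires the positivity of the kernel. I expect this to be the technically delicate part, whereas the iteration and the characterization of fixed points are routine.
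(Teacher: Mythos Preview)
Your compression/shifting strategy is correct and is exactly the paper's approach. The one difference is in how the key inequality $\E\Phi(T_\epsilon f^{(i)}) \ge \E\Phi(T_\epsilon f)$ is justified: you argue by majorization (for each $y=x_{\hat i}$, the pair $\bigl(T_\epsilon f^{(i)}(0,y),\, T_\epsilon f^{(i)}(1,y)\bigr)$ has the same sum as the unshifted pair but larger spread, because $T_\epsilon^{(\hat i)}|u| \ge |T_\epsilon^{(\hat i)}u|$, and then invoke Karamata), whereas the paper introduces the auxiliary \emph{anti-shift} $h$ (push the $1$'s down to $x_i=0$ instead of up) and proves the pointwise convex-combination identity
\[
T_\epsilon f(x) \;=\; \theta(x_{\hat i})\, T_\epsilon f^{(i)}(x) \;+\; (1-\theta(x_{\hat i}))\, T_\epsilon h(x),
\]
with $\theta$ independent of $x_i$; convexity of $\Phi$ then applies directly, and the symmetry $T_\epsilon h(x_1,x_2^n)=T_\epsilon f^{(i)}(1-x_1,x_2^n)$ makes the two $x_1$-averages of $\Phi$ coincide. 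The two arguments are equivalent---the paper's convex-combination identity is exactly the statement that the unshifted pair is a doubly-stochastic image of the shifted pair---so this is a difference of packaging, not of substance. One small correction to your write-up: $T_\epsilon^{(\hat i)}$ does \emph{not} literally commute with $\max/\min$; the inequality you actually need, and which does follow from positivity of the kernel, is $T_\epsilon^{(\hat i)}|u| \ge |T_\epsilon^{(\hat i)}u|$. Your termination argument via a strictly increasing potential and your characterization of fixed points as monotone functions are both standard and correct.
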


\begin{proof}
The proof is inspired by a shifting technique in \cite{Kle66} and a convex combination argument in \cite{CK14} (Theorem 3). Suppose that $f$ is supported on $S$. Let $S_2^n$ be the projection of $S$ on the last $n-1$ bits, i.e., $x_2^n\in S_2^n$ if $(0, x_2^n)\in S$ or $(1, x_2^n)\in S$. We define the following partition of $S_2^n$:
\begin{align*}
A &=\{x_2^n\in S_2^n: (0, x_2^n)\in S, (1, x_2^n)\in S\},\\
B &=\{x_2^n\in S_2^n: (0, x_2^n)\in S, (1, x_2^n)\notin S\},\\
C &=\{x_2^n\in S_2^n: (0, x_2^n)\notin S, (1, x_2^n)\in S\}.
\end{align*}
Then we have $S=(\{0, 1\}\times A)\cup (\{0\}\times B)\cup(\{1\}\times C)$.
Let $g$ be the Boolean function supported on $S'=(\{0, 1\}\times A)\cup (\{1\}\times \{B, C\})$. It is clear that $|S|=|S'|$, and that $f$ and $g$ have the same mean. We claim that $g$ is superior to $f$, i.e., $\E\Phi(T_\epsilon f)\leq \E\Phi(T_\epsilon g)$. Let $h$ be the Boolean function supported on $S''=(\{0, 1\}\times A)\cup (\{0\}\times \{B, C\})$. For any $x\in\{0, 1\}^n$, we will show that
\begin{equation}\label{eq:convex-comb}
T_\epsilon f(x)=\theta T_\epsilon g(x)+(1-\theta)T_\epsilon h(x),
\end{equation}
where $\theta$ depends on $x_2^n$. We only check this identity for $x=(0, x_2^n)$, since the argument is similar for $x=(1, x_2^n)$. Let $X\in\{0, 1\}^n$ be a uniformly random binary string. Let $Y=X+Z$, where the coordinates of $Z$ are i.i.d. Bernoulli($\epsilon$). Then we have
\begin{align*}
T_\epsilon f(0, x_2^n) &= \P(f(Y)=1|X=(0, x_2^n))\nonumber\\
&= \P(Y_2^n\in A|X=(0, x_2^n))\nonumber\\
& ~~~+\P(Y_1=0, Y_2^n\in B|X=(0, x_2^n))\nonumber\\
&~~~ +\P(Y_1=1, Y_2^n\in C|X=(0, x_2^n))\nonumber\\
&= \P(Y_2^n\in A|X=(0, x_2^n))\nonumber\\
& ~~~+\P(Y_1=1, Y_2^n\in B|X=(0, x_2^n))\nonumber\\
& ~~~+\P(Y_1=1, Y_2^n\in C|X=(0, x_2^n))\nonumber\\
&~~~+(1-2\epsilon)\P(Y_2^n\in B|X_2^n=x_2^n)\nonumber\\
&= T_\epsilon g(0, x_2^n)+(1-2\epsilon)\P(Y_2^n\in B|X_2^n=x_2^n).
\end{align*}
Similarly, we have
$$
T_\epsilon f(0, x_2^n)=T_\epsilon h(0, x_2^n)-(1-2\epsilon)\P(Y_2^n\in C|X_2^n=x_2^n).
$$
Therefore, identity \eqref{eq:convex-comb} holds with 
$$
\theta=\frac{\P(Y_2^n\in C|X_2^n=x_2^n)}{\P(Y_2^n\in B|X_2^n=x_2^n)+\P(Y_2^n\in C|X_2^n=x_2^n)}.
$$
Notice that $\theta$ is independent of $x_1$. We first apply the convex function $\Phi$ to \eqref{eq:convex-comb}, and then average both sides over the first bit. Then we have
\begin{equation}\label{eq:avg-1st}
\E\Phi(T_\epsilon f(X_1, x_2^n))\leq\theta\E\Phi(T_\epsilon g(X_1, x_2^n))+(1-\theta)\E\Phi(T_\epsilon h(X_1, x_2^n)).
\end{equation}
Notice that
$$
\E\Phi(T_\epsilon g(X_1, x_2^n))=\E\Phi(T_\epsilon h(X_1, x_2^n)),
$$
which follows from
\begin{align*}
T_\epsilon g(0, x_2^n)&=\P(Y_2^n\in A|X_1=0, X_2^n=x_2^n)\\
&~~~+\P(Y_1=1, Y_2^n\in \{B, C\}|X_1=0, X_2^n=x_2^n)\\
&=\P(Y_2^n\in A|X_2^n=x_2^n)\\
&~~~+\epsilon\P(Y_2^n\in \{B, C\}|X_2^n=x_2^n)\\
&=\P(Y_2^n\in A|X_2^n=x_2^n)\\
&~~~+\P(Y_1=0, Y_2^n\in \{B, C\}|X_1=1, X_2^n=x_2^n)\\
&=T_\epsilon h(1, x_2^n),
\end{align*}
and similarly
$$
T_\epsilon g(1, x_2^n)=T_\epsilon h(0, x_2^n).
$$
Hence, inequality \eqref{eq:avg-1st} becomes
$$
\E\Phi(T_\epsilon f(X_1, x_2^n))\leq\E\Phi(T_\epsilon g(X_1, x_2^n).
$$
We will have $\E\Phi(T_\epsilon f)\leq \E\Phi(T_\epsilon g)$ by averaging both sides of the above inequality over $x_2^n$. Repeat the argument over the last $n-1$ bits. We will arrive at a monotone function. 
\end{proof} 

\begin{rmk}
Theorem \ref{thm:mono} was proved in \cite{LM18} for $\Phi(x)=x^k$ where $k$ is a positive integer. In this case, the theorem can be rephrased as follows. Let $0<\rho<1$. Let $Y^1, \cdots, Y^k\in\{0, 1\}^n$ be $\rho$-correlated uniformly random binary strings. Fix the mean $\E f$. Then $\E \prod_{i=1}^k f(Y^{i})$ is maximized by some monotone function.
\end{rmk}

\begin{rmk}
It might be worth to point out a short proof of $\E\Phi(T_\epsilon f)\leq \E\Phi(T_\epsilon g)$. One can check that 
$$
T_\epsilon f(x)+T_\epsilon f(\sigma_1(x))=T_\epsilon g(x)+T_\epsilon g(\sigma_1(x)),
$$
$$
|T_\epsilon f(x)-T_\epsilon f(\sigma_1(x))|\leq|T_\epsilon g(x)-T_\epsilon g(\sigma_1(x))|.
$$
Then the desired statement follows from the majorization inequality for convex functions.
\end{rmk}

For $k\geq 2$, $\Phi(x)=x^k+(1-x)^k$ is convex for $x\in[0, 1]$. As a consequence of Theorem \ref{thm:mono}, we have the following result on the $k$-player NICD problem, which was obtained by Mossel and O'Donnell \cite{MOD05} for balanced Boolean function. (The assumption that functions are balanced does not seem to be used in their proof).
\begin{coro}
Let $0<\rho<1$. Let $Y^1, \cdots, Y^k\in\{0, 1\}^n$ be $\rho$-correlated uniformly random binary strings. Fix the mean $\E f$. Then $\P(f(Y^1)=\cdots=f(Y^k))$ is maximized by some monotone function. 
\end{coro}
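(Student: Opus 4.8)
The plan is to reduce the corollary to Theorem~\ref{thm:mono} by expressing the agreement probability as $\E\Phi(T_\epsilon f)$ for a single convex function $\Phi$, and then quoting Theorem~\ref{thm:mono} directly. The one point to be careful about is that it does \emph{not} suffice to apply Theorem~\ref{thm:mono} separately to $t\mapsto t^k$ and to $t\mapsto (1-t)^k$: those two convex functionals might a priori be maximized by different monotone functions, whereas the agreement probability is their sum and demands a single common maximizer. Packaging the two pieces into one convex function \emph{before} invoking Theorem~\ref{thm:mono} removes this difficulty.

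As in the proof of Proposition~\ref{prop:holder}, I would write $Y^i=X+Z^i$ with $X$ uniform on $\{0,1\}^n$ and the $Z^i$ i.i.d.\ with Bernoulli$(\epsilon)$ coordinates, $\epsilon=(1-\sqrt\rho)/2$, so that $\rho=(1-2\epsilon)^2$ and, by the conditioning identity of Section~\ref{sec:noise}, $\E\prod_{i=1}^k g(Y^i)=\E(T_\epsilon g)^k$ for every $g$. Since $f$ is $\{0,1\}$-valued, the event $\{f(Y^1)=\cdots=f(Y^k)\}$ splits according to whether the common value is $1$ or $0$, so that
\begin{align*}
\P(f(Y^1)=\cdots=f(Y^k))
&=\E\prod_{i=1}^k f(Y^i)+\E\prod_{i=1}^k\bigl(1-f(Y^i)\bigr)\\
&=\E(T_\epsilon f)^k+\E\bigl(T_\epsilon(1-f)\bigr)^k\\
&=\E\bigl[(T_\epsilon f)^k+(1-T_\epsilon f)^k\bigr],
\end{align*}
using that $T_\epsilon$ is linear and fixes constants, whence $T_\epsilon(1-f)=1-T_\epsilon f$. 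Thus $\P(f(Y^1)=\cdots=f(Y^k))=\E\Phi(T_\epsilon f)$ with $\Phi(t)=t^k+(1-t)^k$. Both $t\mapsto t^k$ and $t\mapsto (1-t)^k$ are convex on $[0,1]$, hence so is $\Phi$; and since $T_\epsilon f$ is $[0,1]$-valued for Boolean $f$, this is exactly the range over which convexity is used in the proof of Theorem~\ref{thm:mono}. Applying that theorem with this $\Phi$ and the prescribed mean $\E f$ shows that $\E\Phi(T_\epsilon f)$, and therefore the agreement probability, is maximized by some monotone function.

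There is essentially no obstacle beyond the packaging observation above; the real work is already contained in Theorem~\ref{thm:mono}, and the only subtlety is recognizing that the "maximize the all-ones probability" and "maximize the all-zeros probability" parts must be combined into one convex functional rather than handled by two separate invocations. If one prefers an argument that does not cite Theorem~\ref{thm:mono} as a black box, one can simply re-run its proof: the shifting step produces $g$ (and an auxiliary $h$) with $T_\epsilon f=\theta T_\epsilon g+(1-\theta)T_\epsilon h$ on each fibre $\{(0,x_2^n),(1,x_2^n)\}$ and with $T_\epsilon g$, $T_\epsilon h$ equidistributed on that fibre, so that on each fibre $T_\epsilon f$ is a mean-preserving contraction of $T_\epsilon g$ (its two fibre values lie between those of $T_\epsilon g$ and have the same average). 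Consequently a single shifting pass weakly increases $\E\Psi(T_\epsilon\cdot)$ \emph{simultaneously} for every convex $\Psi$ --- in particular for the $\Phi$ above --- while preserving the mean; iterating over all $n$ coordinates terminates at a monotone function, along which $\P(f(Y^1)=\cdots=f(Y^k))$ never decreases. For balanced $f$ this recovers the result of Mossel and O'Donnell~\cite{MOD05}.
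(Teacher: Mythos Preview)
Your proof is correct and, like the paper, derives the corollary from Theorem~\ref{thm:mono}. The difference is in the packaging: the paper argues, via the bit-flip symmetry $x\mapsto\vec{1}-x$, that since $\E\prod_i f(Y^i)$ attains its maximum at both a monotone and an anti-monotone function, the two summands $\E\prod_i f(Y^i)$ and $\E\prod_i(1-f)(Y^i)$ can be simultaneously maximized by one monotone $f$; you instead absorb both summands into the single convex $\Phi(t)=t^k+(1-t)^k$ and invoke Theorem~\ref{thm:mono} once. Your route is slightly cleaner, precisely because it sidesteps the issue you flag---that two separate applications of Theorem~\ref{thm:mono} need not a priori yield a common maximizer---whereas the paper's one-line justification leans implicitly on the fact (which you also spell out) that the shifting step increases $\E\Psi(T_\epsilon\cdot)$ for every convex $\Psi$ at once.
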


We have seen from Theorem \ref{thm:high-low} that among balanced Boolean functions the dictator function maximizes $\E(T_\epsilon f)^\alpha$ in both the low and high noise scenarios for fixed $n$ and $\alpha$. One may expect that the same property holds for arbitrary noise. The following result shows that this is not true if $\alpha$ is large. 

We define the \emph {simplicial order} on $\{0, 1\}^n$ as follows. Each $x\in \{0, 1\}^n$ is associated with a subset $A_x\subseteq [n]$ in the natural way that $i\in A_x$ if and only if $x_i=1$.  We say $x\prec y$ if $|A_x|>|A_y|$ or $|A_x|=|A_y|$ but $\max(A_x\Delta A_y)\in A_y$, where $\Delta$ is the symmetric difference operation between two sets.

For an odd number $n$, we define the {\emph {majority function}} 
$$
\text{Maj}_n(x)=\frac{1+\text{sgn}(\sum_{i=1}^nx_i-n/2)}{2}
$$
In particular, $\text{Maj}_1(x)$ is the dictator function, which only looks at the first bit.

\begin{thm}\label{thm:asym}
Let $n, \epsilon$ be fixed. Let $\alpha$ be sufficiently large. Then, $\E(T_\epsilon f)^\alpha$ is maximized by the Boolean function supported on the initial segment of  $\{0, 1\}^n$ labelled in the simplicial order. In particular, among balanced Boolean functions, $\E(T_\epsilon f)^\alpha$ is maximized by any function which is 0 on all strings with fewer than $n/2$ 1's; and, for $n$ odd, $\E(T_\epsilon f)^\alpha$ is maximized by the majority function.
\end{thm}

The following statement was informed to us by Nathan Keller. It is probably scattered somewhere in the literature. We provide a proof for the convenience of readers. 

\begin{lem}\label{lem:max-1}
If $f$ is a monotone increasing (monotone decreasing, respectively) function (not necessarily Boolean), then $T_\epsilon f(x)$ is also monotone increasing (monotone decreasing, respectively). In particular, $T_\epsilon f(x)$ is maximized at $x=\vec{1}=(1, \cdots, 1)$ ($x=\vec{0}=(0, \cdots, 0)$, respectively).
\end{lem}

\begin{proof}
We only prove the monotone increasing case, since the other case can be proved in a similar manner. It suffices to show that $T_\epsilon f(x)\geq T_\epsilon f(x')$, where $x=(1, x_2^n)$ and $x'=(0, x_2^n)$, i.e., $x$ and $x'$ only differ on 1 bit. Recall that
$$
T_\epsilon f(x)=\sum_{y\in\{0, 1\}^n} \epsilon^{d(x, y)}(1-\epsilon)^{n-d(x, y)}f(y),
$$
where $d(x, y)$ is the Hamming distance between $x$ and $y$. Couple the summands on $y=(1, y_2^n)$ and $y'=(0, y_2^n)$. We can rewrite $T_\epsilon f(x)$ as
$$
T_\epsilon f(x)=\sum_{y_2^n\in\{0, 1\}^{n-1}} \epsilon^{d(x_2^n, y_2^n)}(1-\epsilon)^{n-1-d(x_2^n, y_2^n)}[(1-\epsilon)f(1, y_2^n)+\epsilon f(0, y_2^n)],
$$
where the summation is taken over all binary sequences of length $n-1$. Similarly, we have
$$
T_\epsilon f(x')=\sum_{y_2^n\in\{0, 1\}^{n-1}} \epsilon^{d(x_2^n, y_2^n)}(1-\epsilon)^{n-1-d(x_2^n, y_2^n)}[\epsilon f(1, y_2^n)+(1-\epsilon) f(0, y_2^n)].
$$
Since $f$ is monotone increasing, we have $ f(1, y_2^n)\geq f(0, y_2^n)$. The fact that $\epsilon\in[0, 1/2]$ implies 
$$
(1-\epsilon)f(1, y_2^n)+\epsilon f(0, y_2^n)\geq \epsilon f(1, y_2^n)+(1-\epsilon) f(0, y_2^n).
$$
Hence, the desired statement follows.
\end{proof}

We need the following result, which was essentially proved in \cite{MOD05}.

\begin{lem}[Proposition 4.2, \cite{MOD05}]\label{lem:maj}
The function $T_\epsilon f(\vec{1})$ is maximized by the Boolean function supported on the initial segment of  $\{0, 1\}^n$ labelled in the simplicial order. In particular, among balanced Boolean function, $T_\epsilon f(\vec{1})$ is maximized by any function which is 0 on all strings with fewer than $n/2$ 1's; and, for $n$ odd, $T_\epsilon f(\vec{1})$ is maximized by the majority function. 
\end{lem}
\begin{proof}
The statement simply follows from
$$
T_\epsilon f(\vec{1})=\sum_{x\in S}\epsilon^{d(x, \vec{1})}(1-\epsilon)^{n-d(x, \vec{1})},
$$
where $S$ is the support of $f$, and $d(x, \vec{1})$ is the Hamming distance between $x$ and $\vec{1}$, and the simple fact that the quantity being summed is strictly decreasing with respect to $d(x, \vec{1})$.
\end{proof}

\begin{proof}[Proof of Theorem \ref{thm:asym}]
The proof relies on the simple observation that $\E(T_\epsilon f)^\alpha$ is essentially determined by the largest value of $T_\epsilon f$ when $\alpha$ is large. To avoid ambiguity, we assume that the support of $f$ has size $\sum_{i=1}^k{n\choose i}$ for some $k$. One can apply the same argument in the general setting. Invoke Theorem \ref{thm:mono}, then we can assume that $f$ is monotone increasing. Using Lemma \ref{lem:max-1}, we have
$$
\E(T_\epsilon f)^\alpha=2^{-n}\sum_{x\in\{0, 1\}^n}(T_\epsilon f(x))^\alpha\leq (T_\epsilon f(\vec{1}))^\alpha.
$$
Let $g$ be the Boolean function supported on the initial segment of  $\{0, 1\}^n$ labelled in the simplicial order, which is the Hamming ball centered at $\vec{1}$ with radius $k$. It is clear that
$$
\E(T_\epsilon g)^\alpha= 2^{-n}\sum_{x\in\{0, 1\}^n}(T_\epsilon g(x))^\alpha\geq 2^{-n}(T_\epsilon g(\vec{1}))^\alpha.
$$
By Lemma \ref{lem:maj}, we have $T_\epsilon f(\vec{1})<T_\epsilon g(\vec{1})$ for $f\neq g$. (Here, we implicitly use the assumption of the size of the support of $f$). Then the theorem follows from $(T_\epsilon f(\vec{1}))^\alpha<2^{-n}(T_\epsilon g(\vec{1}))^\alpha$ for sufficiently large $\alpha=\alpha(n)$. 
\end{proof}

Then we can recover the following result of Mossel and O'Donnell \cite{MOD05}.
\begin{coro}
Let $0<\rho<1$. Let $Y^1, \cdots, Y^k\in\{0, 1\}^n$ be $\rho$-correlated uniformly random binary strings. For sufficiently large $k$, among balanced Boolean functions, the agreement probability $\P(f(Y^1)=\cdots=f(Y^k))$ is maximized by any function which is 0 on all strings with fewer than $n/2$ 1's. For $n$ odd, the agreement probability is maximized by the majority function.
\end{coro}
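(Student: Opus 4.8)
The plan is to reduce the agreement probability to a sum of two $k$-th moments and then invoke Theorem~\ref{thm:asym}. Set $\epsilon=(1-\sqrt\rho)/2$ and realize $Y^i=X+Z^i$ with $X$ uniform and the $Z^i$ independent with i.i.d.\ Bernoulli$(\epsilon)$ coordinates, as elsewhere in the paper. Splitting the event $\{f(Y^1)=\cdots=f(Y^k)\}$ according to whether the common value is $1$ or $0$ gives, for every Boolean $f$,
\[
\P(f(Y^1)=\cdots=f(Y^k))=\E\prod_{i=1}^kf(Y^i)+\E\prod_{i=1}^k(1-f)(Y^i)=\E(T_\epsilon f)^k+\E(T_\epsilon(1-f))^k .
\]
Write $M(g):=\E(T_\epsilon g)^k$. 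If $f$ is balanced then so is $1-f$, so once $k$ exceeds the threshold in Theorem~\ref{thm:asym} (which depends only on $n$ and $\epsilon$, hence a single threshold works as there are finitely many functions on $\{0,1\}^n$) we get $M(f)\le M^\star$ and $M(1-f)\le M^\star$, where $M^\star:=\max\{M(g):g\text{ balanced}\}$ is attained at every ``majority-type'' $g$, i.e.\ every balanced $g$ that is $0$ on all strings of Hamming weight below $n/2$ and $1$ on all strings of weight above $n/2$ (for $n$ even one checks as in Lemma~\ref{lem:maj} that a maximizer must indeed also be $1$ above the middle; for $n$ odd this is forced by balance). Consequently $\P(f(Y^1)=\cdots=f(Y^k))\le 2M^\star$ for every balanced $f$.

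It remains to show this bound is attained by the majority-type functions. For such an $f_0$ we have $M(f_0)=M^\star$ by definition, so it suffices to show $M(1-f_0)=M^\star$; this is the only step with content, since a priori $g\mapsto M(g)$ and $g\mapsto M(1-g)$ are maximized by different-looking functions. The key observation is that bitwise complementation $x\mapsto\bar x$ preserves the uniform measure and commutes with the noise operator, $T_\epsilon(g\circ C)=(T_\epsilon g)\circ C$ where $C(x)=\bar x$ — indeed $\overline{x+Z}=\bar x+Z$ (additions mod $2$) — so $M(g\circ C)=M(g)$ for every $g$. Applying this with $g=1-f_0$ gives $M(1-f_0)=M\big((1-f_0)\circ C\big)=M(g_0)$ with $g_0(x)=1-f_0(\bar x)$. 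Now $g_0$ is balanced, and it is again majority-type: if $\operatorname{wt}(x)<n/2$ then $\operatorname{wt}(\bar x)>n/2$, so $f_0(\bar x)=1$ and $g_0(x)=0$, and symmetrically $\operatorname{wt}(x)>n/2$ forces $g_0(x)=1$. Hence $M(g_0)=M^\star$ by Theorem~\ref{thm:asym}, so $M(1-f_0)=M^\star$, and $\P(f_0(Y^1)=\cdots=f_0(Y^k))=2M^\star$, the maximum over balanced $f$.

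Finally, for $n$ odd the majority-type condition is rigid: the strings of weight below $n/2$ number exactly $2^{n-1}$, so a balanced $f$ vanishing on all of them vanishes on exactly that set, i.e.\ $f=\text{Maj}_n$; this yields the last sentence (and shows the maximizer is then unique, whereas for $n$ even the tie-breaking on the weight-$n/2$ layer is free). I do not anticipate a genuine obstacle: the argument is just the reduction to $M(f)+M(1-f)$, two applications of Theorem~\ref{thm:asym}, and the complementation-invariance of $M$. The mildest care needed is the uniformity of the threshold in $k$ and, for $n$ even, keeping track of which balanced functions actually achieve $M^\star$ (so that the hypothesis ``$0$ below the middle'' is read together with ``$1$ above the middle'').
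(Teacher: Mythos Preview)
Your proposal is correct and follows essentially the same route the paper implicitly takes: decompose the agreement probability as $\E(T_\epsilon f)^k+\E(T_\epsilon(1-f))^k$, bound each summand by Theorem~\ref{thm:asym}, and then use a symmetry to see that a majority-type $f_0$ simultaneously maximizes both summands. The paper does not write out a proof of this corollary, but the pattern is set by the analogous corollaries following Theorems~\ref{thm:high-low} and~\ref{thm:mono}, where the key observation is exactly that the maximizer of $\E(T_\epsilon f)^k$ can be taken to also maximize $\E(T_\epsilon(1-f))^k$. Your explicit use of the bit-complementation $C(x)=\bar x$ to show $M(1-f_0)=M((1-f_0)\circ C)=M(g_0)=M^\star$ is a clean way to spell out this symmetry, and your care about the $n$ even case (noting via Lemma~\ref{lem:maj} that a maximizer must be $1$ above the middle as well as $0$ below it) is a point the paper glosses over in the statement of Theorem~\ref{thm:asym}.
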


\begin{rmk}
We have shown that, within the class of balanced Boolean functions, the dictator function $\text{Maj}_1$ and the majority function $\text{Maj}_n$ have the maximal noise stability in the asymptotic regimes--$\epsilon$ close to 0 or 1/2 with $k$ fixed, and $k$ large with $\epsilon$ fixed, respectively.
But there exists $k, \epsilon$, $n$ odd and $1<r<n$ such that the function $\text{Maj}_r(x)=[1+\text{sgn}\left(\sum_{i=1}^rx_i-r/2\right)]/2$ is superior to both the dictator function and the majority function. Consider the numerical example $k=10, \epsilon=0.26, n=5, r=3$, which is taken from  \cite{MOD05} (Proposition 5.2). One can check that $\E (T_\epsilon\text{Maj}_1)^{10}\leq 0.0247, \E (T_\epsilon\text{Maj}_5)^{10}\leq0.0244$ and $\E (T_\epsilon\text{Maj}_3)^{10}\geq0.0248$. We do not know whether $\E(T_\epsilon f)^k$ is always maximized by some $\text{Maj}_r$.
\end{rmk}


\section{The most informative Boolean function}\label{sec:mutual}
Let $X\in\{0, 1\}^n$ be a binary string selected uniformly at random. Let $Y$ be the output of $X$ through a $\text{BSC}(\epsilon)$ channel, i.e., $Y=X+Z$, where the coordinates of $Z$ are independent Bernoulli($\epsilon$). Let $f$ be a Boolean function. It is  conjectured by Courtade and Kumar {\cite{CK14} that the dictator function maximizes the mutual information $I(X; f(Y))$ between $X$ and $f(Y)$. Recall that
$$
I(X; f(Y))=H(f(Y))-H(f(Y)|X).
$$
Notice that $f(Y)$ is a Bernoulli random variable with the parameter $\P(f(Y)=1)=\E f$. We have $H(f(Y))=H(\E f)$. Here, we denote by $H(p)$ the Shannon entropy of $\text{Bernoulli}(p)$. Hence, for fixed $\E f$, it suffices to maximize $-H(f(Y)|X)$. Given $X$, the random variable $f(Y)$ is still Bernoulli with the parameter $\E[f(Y)|X]=T_\epsilon f(X)$. Therefore, we have
$$
-H(f(Y)|X)=\E\Phi(T_\epsilon f),
$$
where the entropy function $\Phi(x)=x\log x+(1-x)\log(1-x)$ for $x\in [0, 1]$.

Recall that $\Psi_\alpha(x)=x^\alpha+(1-x)^\alpha$ for $\alpha>1$ and $x\in[0, 1]$. One can check that $\partial_\alpha\Psi_\alpha(x)|_{\alpha=1}=\Phi(x)$. Hence, we have
$$
-H(f(Y)|X)=\partial_\alpha\E\Psi_\alpha(T_\epsilon f)|_{\alpha=1}.
$$ 
The initial value $\E\Psi_\alpha(T_\epsilon f)|_{\alpha=1}$ is fixed. Hence, for fixed $\E f$, if there a unique (up to isometric equivalence) maximizer $f$ of $\E\Psi_\alpha(T_\epsilon f)$  for all $\alpha\in (1, \alpha_0)$, where $\alpha_0>1$ could be dimension dependent, then $f$ also maximizes $I(X; f(Y))$ among all Boolean functions with the same expectation $\E f$. Conversely, for fixed $\E f$, if $f$ is the unique (up to isometric equivalence) maximizer  of $I(X; f(Y))$, then it also maximizes $\E\Psi_\alpha(T_\epsilon f)$ for $\alpha\in (1, \alpha_0)$ among all Boolean functions with the same expectation $\E f$. This connection between Courtade-Kumar's conjecture and the $\alpha$-NICD problem, together with Remark \ref{rmk:nicd-noise-stability}, yields the following result.  
\begin{coro}
Let $\E f$ be fixed. When $\epsilon=\epsilon(n)$ is sufficiently small, the mutual information $I(X; f(Y))$ is maximized by the lexicographic function. When $\epsilon=\epsilon(n)$ is sufficiently close to $1/2$,  the mutual information $I(X; f(Y))$ is maximized by some Boolean function with the largest degree-1 Fourier weight. In particular, within the class of balanced Boolean functions, the dictator function maximizes  the mutual information in both scenarios.   
\end{coro}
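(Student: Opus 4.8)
The plan is to carry out the reduction sketched just before the statement and then invoke Theorem~\ref{thm:high-low}. Since $\E f$ is fixed, $I(X;f(Y))=H(\E f)-H(f(Y)|X)$, so one must maximize $-H(f(Y)|X)=\E[\varphi(T_\epsilon f)+\varphi(T_\epsilon(1-f))]$ with $\varphi(t)=t\log t$, by \eqref{eq:conn}. The link to Theorem~\ref{thm:high-low} is the identity $-H(f(Y)|X)=\partial_\alpha G(\alpha,f)\big|_{\alpha=1}$, where $G(\alpha,f):=\E[(T_\epsilon f)^\alpha+(T_\epsilon(1-f))^\alpha]$ and $G(1,\cdot)\equiv1$: if a single $f^\ast$ maximizes $G(\alpha,\cdot)$ over mean-$\E f$ functions for every $\alpha$ in an interval $(1,1+\delta)$, then dividing $G(\alpha,f)-1\le G(\alpha,f^\ast)-1$ by $\alpha-1>0$ and letting $\alpha\downarrow1$ gives $-H(f(Y)|X)\le-H(f^\ast(Y)|X)$. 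Theorem~\ref{thm:high-low} identifies this $f^\ast$: for $\epsilon$ small it is the lexicographic function, for $\epsilon$ close to $1/2$ any function with largest $\sum_i\hat f(\{i\})^2$; that the same $f^\ast$ also maximizes $\E(T_\epsilon(1-f))^\alpha$, hence $G(\alpha,\cdot)$, was noted in the discussion following that theorem (via the cube reflection $x\mapsto\bar x$ in the lexicographic case, and because $\widehat{1-f}(\{i\})=-\hat f(\{i\})$ leaves $\sum_i\hat f(\{i\})^2$ fixed in the strong-noise case).

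The step needing care is the uniformity in $\alpha$ near $1$ (the noise threshold in Theorem~\ref{thm:high-low} is stated for a fixed $\alpha>1$), and I would handle both regimes by estimating $-H(f(Y)|X)$ directly, which also makes the thresholds explicit. For $\epsilon\downarrow0$: if $f(x)=1$ then $T_\epsilon(1-f)(x)=b(x)\epsilon+O(\epsilon^2)$, with $b(x)$ the number of neighbors of $x$ outside $\supp f$, and symmetrically when $f(x)=0$; feeding this into $\varphi$ and summing, and using $\sum_xb(x)=2|\partial(\supp f)|=2^nI(f)$ from \eqref{eq:geo-influence}, gives
$$
-H(f(Y)|X)=I(f)\,\epsilon\log\epsilon+O(\epsilon)
$$
uniformly over the finitely many Boolean functions of a given mean. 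Since $\epsilon\log\epsilon<0$, for small $\epsilon$ the maximum is attained by minimizing $I(f)$, i.e.\ by the lexicographic function (Harper \cite{Har64}). For $\epsilon$ close to $1/2$, set $\delta=1-2\epsilon$, expand $T_\epsilon f=\E f+\delta\sum_i\hat f(\{i\})(-1)^{x_i}+O(\delta^2)$, Taylor-expand $\varphi$ about $\E f$ and $1-\E f$, and use $\E_x(T_\epsilon f-\E f)=0$ together with Parseval; this yields
$$
-H(f(Y)|X)=-\bigl(\E f\log\E f+(1-\E f)\log(1-\E f)\bigr)+\frac{\delta^2}{2}\Bigl(\frac1{\E f}+\frac1{1-\E f}\Bigr)\sum_i\hat f(\{i\})^2+O(\delta^3),
$$
whose leading $\delta$-dependent term has a positive coefficient, so for $\epsilon$ near $1/2$ the maximum is attained by maximizing $\sum_i\hat f(\{i\})^2$. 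When $f$ is balanced the lexicographic function of mean $1/2$ is the dictator, which also maximizes $\sum_i\hat f(\{i\})^2$ because $\sum_i\hat f(\{i\})^2\le\E f-(\E f)^2$ with equality for the dictator; hence the dictator is the extremizer in both regimes.

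The main obstacle is the tie-breaking concealed in the phrase ``maximized by the lexicographic function'': when several $f$ share the minimal total influence, the expansion $-H(f(Y)|X)=I(f)\epsilon\log\epsilon+O(\epsilon)$ does not separate them, and one must check that among the edge-isoperimetric minimizers the lexicographic one remains optimal at order $\epsilon$ (equivalently, has the largest value of $\sum_xb(x)\log b(x)$). When $\E f=2^{-j}$ the minimizers are exactly the $(n-j)$-dimensional subcubes, a single orbit under automorphisms of the cube, so $-H(f(Y)|X)$ is constant over them and the issue disappears; for other means this is the delicate point, and it is precisely the ambiguity already present in Theorem~\ref{thm:high-low} (with the analogous caveat for ties in $\sum_i\hat f(\{i\})^2$ in the strong-noise regime).
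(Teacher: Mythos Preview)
Your first paragraph is exactly the paper's argument: reduce to Theorem~\ref{thm:high-low} via $-H(f(Y)|X)=\partial_\alpha G(\alpha,f)\big|_{\alpha=1}$ with $G(1,\cdot)\equiv1$, together with the observation (made just before Corollary~\ref{coro:app-nicd}) that the same extremizer handles both summands of $G$. Your second paragraph goes beyond the paper. You correctly note that the noise threshold in Theorem~\ref{thm:high-low} is stated for a fixed $\alpha>1$, so uniformity as $\alpha\downarrow1$ is not automatic; the paper glosses over this. Your remedy---expanding $-H(f(Y)|X)$ directly as $I(f)\,\epsilon\log\epsilon+O(\epsilon)$ for small $\epsilon$, and as a quadratic in $\delta=1-2\epsilon$ with coefficient proportional to $\sum_i\hat f(\{i\})^2$ near $\epsilon=1/2$---is a self-contained alternative that effectively reruns the proof of Theorem~\ref{thm:high-low} for the specific convex function $\varphi(t)=t\log t$ in place of $t^\alpha$, and it bypasses the $\alpha$-uniformity issue entirely while making the leading terms explicit. (One typo: the constant term in your strong-noise display should be $\E f\log\E f+(1-\E f)\log(1-\E f)$, without the leading minus; this does not affect the argument since that term depends only on $\E f$.) The tie-breaking concern you raise in the third paragraph is legitimate and is the same ambiguity already present in Theorem~\ref{thm:high-low}; the paper does not resolve it either.
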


\begin{rmk}
In Appendix B of  \cite{CK14}, Courtade and Kumar observed that their conjecture holds when $\epsilon=\epsilon(n)\to 0, 1/2$. A quantitative bound in the high noise case can be found in Corollary 1 of Ordentlich, Shayevitz, and Weinstein \cite{OSW15}.(In an unpublished work, similar results were independently obtained by Sachdeva and Samorodnitsky). Samorodnitsky \cite{Sam16} gave a dimension-free bound in the high noise setting. Our result provides a finer characterization of maximizers of $I(X; f(Y))$ when $\E f$ is fixed. This was also observed in Appendix B of \cite{CK14}. 
\end{rmk}

\begin{rmk}
As a consequence of Theorem \ref{thm:mono}, it suffices to study Courtade-Kumar's conjecture for monotone functions. This has been observed by Courtade and Kumar \cite{CK14}, and Huleihel and Ordentlich \cite{HO17}.
\end{rmk}

When $\alpha=1$ and $\alpha=2$, the dictator function is the maximizer of $\E\Psi_\alpha(T_\epsilon f)$ within the class of balanced Boolean functions. It is reasonable to expect that the dictator function still plays the extremal role for any $1<\alpha<2$. So we propose the following conjecture, which implies Courtade-Kumar's conjecture for balanced Boolean functions.
\begin{conj}\label{conj:phi-entropy}
For $1\leq \alpha\leq 2$, the dictator function maximizes $\E(T_\epsilon f)^\alpha$ within the class of balanced Boolean functions.
\end{conj}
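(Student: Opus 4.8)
The plan is to reduce to monotone functions, to use the two values of $\alpha$ at which the statement is already available, and then to interpolate across $\alpha\in(1,2)$. By Theorem~\ref{thm:mono} applied to the convex function $\Phi(t)=t^\alpha$ (note $T_\epsilon f\ge0$ since $f\ge0$), the maximum of $\E(T_\epsilon f)^\alpha$ over balanced Boolean functions is attained at a balanced \emph{monotone} function, so we may take $f$ monotone. At $\alpha=1$ every balanced $f$ gives $\E T_\epsilon f=\tfrac12$; at $\alpha=2$ the Parseval bound already in the paper gives $\E(T_\epsilon f)^2\le\tfrac14+\tfrac{(1-2\epsilon)^2}{4}$, with equality only at $\text{Maj}_1$ and the inequality strict, with a gap depending on $n$ and $\epsilon$, for every other balanced function. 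Theorem~\ref{thm:high-low} moreover already settles the conjecture for all $\alpha>1$ whenever $\epsilon$ is small enough or close enough to $1/2$ (thresholds depending on $n$), so the open region is $\alpha\in(1,2)$ with intermediate noise. A partial first step: since $f\mapsto\E(T_\epsilon f)^\alpha$ is continuous in $\alpha$, uniformly over the finitely many Boolean functions on $\{0,1\}^n$, the strict optimality of $\text{Maj}_1$ at $\alpha=2$ persists on an interval $[2-\delta(n,\epsilon),2]$; the task is to reach all the way down to $\alpha=1$.

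\emph{The main proposed route: a concavity reduction.} Set $\phi_f(\alpha)=\E(T_\epsilon f)^\alpha-\E(T_\epsilon\,\text{Maj}_1)^\alpha$, so $\phi_f(1)=0$ and the conjecture is $\phi_f\le0$ on $[1,2]$. Since $\E(T_\epsilon\text{Maj}_1)^\alpha=\tfrac12(\epsilon^\alpha+(1-\epsilon)^\alpha)$, one computes $\phi_f'(1)=\E[T_\epsilon f\log T_\epsilon f]-\tfrac12(\epsilon\log\epsilon+(1-\epsilon)\log(1-\epsilon))$ and $\phi_f''(\alpha)=\E[(T_\epsilon f)^\alpha(\log T_\epsilon f)^2]-\tfrac12(\epsilon^\alpha(\log\epsilon)^2+(1-\epsilon)^\alpha(\log(1-\epsilon))^2)$. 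Hence, if for every balanced $f$ and every $\alpha\in[1,2]$ one has
\[
\mathrm{(i)}\qquad \E[T_\epsilon f\log T_\epsilon f]\ \le\ \tfrac12\bigl(\epsilon\log\epsilon+(1-\epsilon)\log(1-\epsilon)\bigr),
\]
\[
\mathrm{(ii)}\qquad \E\bigl[(T_\epsilon f)^\alpha(\log T_\epsilon f)^2\bigr]\ \le\ \tfrac12\bigl(\epsilon^\alpha(\log\epsilon)^2+(1-\epsilon)^\alpha(\log(1-\epsilon))^2\bigr),
\]
then $\phi_f'(1)\le0$ and $\phi_f$ is concave on $[1,2]$, so $\phi_f'\le\phi_f'(1)\le0$ there and $\phi_f(\alpha)\le\phi_f(1)=0$, which is the conjecture. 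Inequality (i) asserts that $\text{Maj}_1$ maximizes $\E[T_\epsilon f\log T_\epsilon f]$ among balanced functions — a ``R\'enyi-$1$'' cousin of Courtade--Kumar for balanced functions (Section~\ref{sec:mutual}), currently known only in the extremal noise regimes. Inequality (ii) says $\text{Maj}_1$ maximizes $\E[h_\alpha(T_\epsilon f)]$ for the non-convex profile $h_\alpha(t)=t^\alpha(\log t)^2$ (so Theorem~\ref{thm:mono} does not apply directly); it holds in small numerical checks (e.g.\ $f=\text{Maj}_3$, $\epsilon=\tfrac14$, near $\alpha=1$), and, consistently with the known failure of the conjecture for large $\alpha$, it must break down once $\alpha$ is large. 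Establishing (i) and (ii) for all balanced $f$ — especially over the intermediate range of $\epsilon$ — is the crux, and I expect it to be of essentially the same difficulty as Courtade--Kumar itself.

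\emph{Alternative routes.} Two other handles on $\alpha\in(1,2)$. (a) The subordination identity $t^\alpha=c_\alpha\int_0^\infty s^{-1-\alpha}(e^{-st}-1+st)\,ds$, valid with $c_\alpha>0$ for $1<\alpha<2$, which (using $\E T_\epsilon f=\tfrac12$) reduces the conjecture to comparing $\int_0^\infty s^{-1-\alpha}\,\E e^{-sT_\epsilon f}\,ds$ over balanced $f$; the obstacle is that the single-$s$ integrand is \emph{not} maximized by $\text{Maj}_1$ (the tail of $T_\epsilon f$ favors more noise-stable functions), so one would need an $s$-weighted argument that genuinely uses $\alpha\le2$. (b) Induction on $n$ via the first-coordinate split $T_\epsilon f(b,x')=(1-\epsilon)T_\epsilon f_b(x')+\epsilon T_\epsilon f_{1-b}(x')$, where $f_b(x')=f(b,x')$, which gives $\E(T_\epsilon f)^\alpha=\tfrac12\,\E_{x'}\,\psi_\epsilon(T_\epsilon f_0(x'),T_\epsilon f_1(x'))$ with $\psi_\epsilon(u,v)=((1-\epsilon)u+\epsilon v)^\alpha+(\epsilon u+(1-\epsilon)v)^\alpha$ convex and coordinatewise increasing, turning the conjecture into a two-function inequality on $\{0,1\}^{n-1}$ under the single constraint $\E f_0+\E f_1=1$, with $\text{Maj}_1$ corresponding to the extreme split $f_0\equiv0,\ f_1\equiv1$. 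Here the difficulty is that the unconstrained maximum of the convex $\psi_\epsilon$ is the balancedness-violating choice $f_0\equiv f_1\equiv1$, so one must still show that among balanced splits the extreme one wins — once more a problem of Courtade--Kumar-level difficulty, which is where I expect the attempt to stall.
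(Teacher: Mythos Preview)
The statement is labeled a \emph{Conjecture} in the paper and no proof is given there; the authors explicitly present it as open and note that it would \emph{imply} Courtade--Kumar's conjecture for balanced Boolean functions. There is thus no paper proof to compare against.

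Your proposal is honest about not being a proof: every route you sketch terminates in an unproved inequality that you yourself flag as being of Courtade--Kumar difficulty. Let me name the central gap precisely. Your inequality (i), that $\text{Maj}_1$ maximizes $\E[T_\epsilon f\log T_\epsilon f]$ among balanced $f$, is exactly the derivative at $\alpha=1^{+}$ of the conjectured inequality $\E(T_\epsilon f)^\alpha\le\E(T_\epsilon\text{Maj}_1)^\alpha$; it is therefore a \emph{consequence} of the conjecture, not an independently available lemma. Worse, (i) applied to both $f$ and $1-f$ (using that $T_\epsilon\text{Maj}_1$ and $1-T_\epsilon\text{Maj}_1$ are equidistributed) yields that the dictator maximizes $\E[T_\epsilon f\log T_\epsilon f+(1-T_\epsilon f)\log(1-T_\epsilon f)]$, which is precisely Courtade--Kumar for balanced functions. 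So your main route \emph{assumes} something at least as strong as what the paper says the conjecture would deliver---the dependence is circular. Inequality (ii) is a separate unproved assumption; its integrand $t^\alpha(\log t)^2$ is non-convex on $[0,1]$, so Theorem~\ref{thm:mono} does not apply, and you correctly observe that (ii) must fail for large $\alpha$ by Theorem~\ref{thm:asym}, leaving no mechanism that singles out the interval $[1,2]$.

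The alternative routes (a) and (b) likewise reduce to constrained optimization problems that you acknowledge are of the same difficulty. In summary, you have organized several reformulations, each replacing the conjecture by a statement at least as hard; no new input toward a proof is supplied, which is consistent with the statement's status in the paper as an open conjecture.
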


Let $\alpha\geq 1$. The minimum of $\E(T_\epsilon f)^\alpha$ may be achieved by a Boolean function whose support is ``evenly spaced'' in the discrete cube.  It is likely that the dictator function still plays the extremal role among all functions $f:\{0,1\}^n\to [0, 1]$ such that $\E f=1/2$. It is clear that among all functions $f:\{0,1\}^n\to [0, 1]$ such that $\E f=1/2$, the functional $\E(T_\epsilon f)^\alpha$ is minimized by the constant function $f=1/2$. Without assuming boundedness, the maximum is achieved by Dirac's delta function, which is supported on a single vertex.

\begin{rmk}
Let $\Phi$ be a convex function. The $\Phi$-entropy of a function $f:\{0, 1\}^n\to \R$ is defined as $H_\Phi(f)=\E\Phi(f)-\Phi(\E f)$ (see \cite{Cha04} for discussions of $\Phi$-entropy in more general settings). Let $\Phi(x)=x\log x+(1-x)\log(1-x)$. The Courtade-Kumar conjecture can be rephrased as that the dictator function maximizes $H_\Phi(T_\epsilon f)$ among all Boolean functions. We considered the function $\Phi(x)=x^\alpha$ for $1<\alpha<2$ and conjectured that the dictator function is the maximizer of $H_\Phi(T_\epsilon f)$ within the class of balanced Boolean functions. 
Anantharam et al. \cite{ABCJN17} conjectured that the dictator function is still the maximizer for $H_\Phi(T_\epsilon f)$ with the convex function $\Phi(x)=1-2\sqrt{x(1-x)}$, which is the squared Hellinger distance between two Bernoullis with parameters $x$ and $1-x$, respectively. 
\end{rmk}


\section{General models}
\label{sec:extensions}

Now we discuss the problem of noise stability in more general contexts, where algebraic and geometric features will show their impacts on the analysis. 

\subsection{Discrete torus}


In this subsection, we discuss noise stability of Boolean functions defined on the discrete torus $(\Z/p\Z)^n$, where $\Z/p\Z=\{0, 1, \cdots, p-1\}$ is the cyclic group of order $p$ (and $p$ is not necessarily a prime).  

We first give a brief introduction of Fourier analysis on the group $(\Z/p\Z)^n$ associated with the uniform measure $\mu$. We define $e_p(t)=e^{i2\pi t/p}$ for $t\in\R$. One can check that the set of functions $\{e_p(\xi\cdot x)\}_{\xi\in (\Z/p\Z)^n}$ forms an orthonormal basis, where $\xi\cdot x=\xi_1x_1+\cdots+\xi_nx_n$. Hence, any function $f: (\Z/p\Z)^n\to \R$ has the following Fourier representation
\begin{equation}\label{eq:fourier-expan-1}
f(x)=\sum_{\xi\in (\Z/p\Z)^n}\hat{f}(\xi)e_p(\xi\cdot x),
\end{equation}
where Fourier coefficients  $\hat{f}(\xi)=\E f(x)e_p(\xi\cdot x)$.

Analogous to Definition \ref{defn:noise}, we 
define noise operator acting on functions defined on the discrete torus in a general way  without specifying the distribution of the noise. We will discuss the problem of  noise stability under two types of noise later.

\begin{defn}\label{defn:noise-1}
Let $0\leq \epsilon\leq 1-1/p$. The noise operator $T_\epsilon$ acting on $f:(\Z\slash p\Z)^n\to \R$ is defined as follows
$$
T_\epsilon f(x)=\E f(x+Z),
$$
where  $Z=(Z_1, \cdots, Z_n)\in (\Z/p\Z)^n$ is a random vector with i.i.d. coordinates.
\end{defn}

Correspondingly, the NICD problem can be stated as follows.  Let $X\in(\Z/p\Z)^n$ be a uniform random vector; that is, the coordinates of $X$ are independent and uniform on $\Z/p\Z$.  We pass it on to $k$ players through independent additive noise channels, which are represented as independent copies of $Z$. Upon receiving the message, each player applies a Boolean function to output one alphabet. As usual, their goal is to maximize the agreement probability. We denote by $Y^1, \cdots, Y^k$ the $k$ corrupted versions of $X$. The NICD problem asks the maximum of $\P(f_1(Y^1)=\cdots=f_k(Y^k))$, where $f_1, \cdots, f_k$ are Boolean functions. One can check that Proposition \ref{prop:holder} still holds in this multi-alphabet setting, i.e., the $k$ players should apply the same Boolean function.

Similar to the binary case, the problem of NICD in the multi-alphabet setting also has close connection with the problem of $\alpha$-stability with $\alpha=k$. As in the binary case, the analysis of $\alpha$-stability needs the notation of influence. Influence of real-valued functions (not necessarily Boolean) can be defined in general domains (see e.g., Definition 8.22, \cite{OD14}). We adapt Definition \ref{defn:influence} as follows. Let $\tilde{Z}_j$ be a the restriction of $Z_j$ on $\Z_p\backslash\{0\}$; that is, it has distribution
\begin{equation}\label{eq:z tilde}
\P\big(\tilde{Z}_j=\ell\big)=\frac{\P(Z_j=\ell)}{1-\P(Z_j=0)}.
\end{equation}
Analogous to \eqref{eq:flipping}, we define the random flipping operator $\tilde{\sigma}_j$ as follows
\begin{equation}\label{eq: random flipping}
\tilde{\sigma}_j(x_1,\cdots, x_j, \cdots, x_n)=(x_1,\cdots, x_j+\tilde{Z}_j, \cdots, x_n).
\end{equation}

Then we define influence of Boolean functions on the discrete torus as follows.

\begin{defn}\label{defn:influence-1}
Let $f:(\Z\slash p\Z)^n\to\{0, 1\}$ be a Boolean function. The influence of the $j$-th variable $I_j(f)$ is defined as
$$
I_j(f)=\P(f(X)\neq f(\tilde{\sigma}_j(X))).
$$
(We assume that $\tilde{Z}_j$ is independent of $X$).
The total influence $I(f)$ is defined as 
$$
I(f)=\sum_{j=1}^nI_j(f).
$$ 
\end{defn}

\subsubsection{Noise: type I}

One type of the noise distributions is defined as follows. Let $Z=(Z_1, \cdots, Z_n)$ be the noise vector with i.i.d. coordinates. We define
\begin{equation}\label{eq:noise-1}
\P(Z_1=\ell)=
\left\{  \begin{array}{ll}
1-\epsilon, & \ell=0\\
\frac{\epsilon}{p-1}, & \ell\neq 0.
    \end{array} \right.
\end{equation}
In other words, the additive noise channel $Z$ preserves the value of an alphabet with probability $1-\epsilon$ and changes its value to other values equally likely.

It is easy to check that $\sum_{j=0}^{p-1}e_p(jk)=0$ for any $k\neq0$. This identity, together with the Fourier representation \eqref{eq:fourier-expan-1}, allows us to write noise operator in Definition \ref{defn:noise-1} as follows
\begin{equation}\label{eq:noise-fourier-1}
T_\epsilon f(x)=\sum_{\xi\in (\Z/p\Z)^n}\left(1-\frac{p\epsilon}{p-1}\right)^{|\supp(\xi)|}\hat{f}(\xi)e_p(\xi\cdot x),
\end{equation}
where $\supp(\xi)=\{j: \xi_j\neq 0\}$. When $p=2$, this Fourier representation coincides with \eqref{eq:noise-fourier}.

It seems that our next result could follow from a general result, Proposition 8.23 in \cite{OD14}.

\begin{prop}\label{prop:f-i}
Let $f:(\Z\slash p\Z)^n\to\{0, 1\}$ be a Boolean function. Then we have
$$
I_j(f)=\frac{2p}{p-1}\sum_{\xi: \xi_j\neq0}|\hat{f}(\xi)|^2,
$$
and 
$$
I(f)=\frac{2p}{p-1}\sum_{\xi\in (\Z/p\Z)^n}|\supp(\xi)||\hat{f}(\xi)|^2.
$$

\end{prop}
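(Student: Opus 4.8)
The plan is to imitate, in the $\Z/p\Z$ setting, the Fourier-analytic computation of influence carried out for the discrete cube in Section \ref{sec:noise}. Since $f$ is Boolean, the event $\{f(X)\neq f(\tilde{\sigma}_j(X))\}$ coincides with $\{(f(X)-f(\tilde{\sigma}_j(X)))^2=1\}$, so $I_j(f)=\E(f(X)-f(\tilde{\sigma}_j(X)))^2$, where the expectation is over $X$ uniform on $(\Z/p\Z)^n$ and $\tilde{Z}_j$ uniform on $\Z/p\Z\setminus\{0\}$, independently of $X$. Plugging in the Fourier expansion $f(x)=\sum_\xi\hat{f}(\xi)e_p(\xi\cdot x)$ and using $e_p(\xi\cdot\tilde{\sigma}_j(x))=e_p(\xi\cdot x)e_p(\xi_j\tilde{Z}_j)$, I would write
$$
f(X)-f(\tilde{\sigma}_j(X))=\sum_\xi\hat{f}(\xi)\,(1-e_p(\xi_j\tilde{Z}_j))\,e_p(\xi\cdot X).
$$

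Next I would condition on $\tilde{Z}_j$ and apply Parseval on $(\Z/p\Z)^n$, using that $\{e_p(\xi\cdot x)\}$ is orthonormal and that $f$ is real so $(\cdot)^2=|\cdot|^2$. This gives
$$
I_j(f)=\sum_\xi|\hat{f}(\xi)|^2\,\E_{\tilde{Z}_j}\big|1-e_p(\xi_j\tilde{Z}_j)\big|^2 .
$$
The terms with $\xi_j=0$ vanish. For $\xi_j\neq0$, I would expand $|1-e_p(\xi_j\tilde{Z}_j)|^2=2-e_p(\xi_j\tilde{Z}_j)-e_p(-\xi_j\tilde{Z}_j)$ and invoke the elementary identity $\sum_{z=0}^{p-1}e_p(kz)=0$, valid for every $k\not\equiv0\pmod p$; hence $\sum_{z=1}^{p-1}e_p(\pm\xi_j z)=-1$, which yields $\E_{\tilde{Z}_j}|1-e_p(\xi_j\tilde{Z}_j)|^2=2+\tfrac{2}{p-1}=\tfrac{2p}{p-1}$. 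This establishes the first formula. The second is then immediate: summing over $j$ and interchanging the order of summation,
$$
I(f)=\sum_{j=1}^nI_j(f)=\frac{2p}{p-1}\sum_{j=1}^n\sum_{\xi:\,\xi_j\neq0}|\hat{f}(\xi)|^2=\frac{2p}{p-1}\sum_\xi|\supp(\xi)|\,|\hat{f}(\xi)|^2 .
$$

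Since every step is a routine generalization of the $p=2$ computation already displayed in Section \ref{sec:noise}, I do not anticipate a genuine obstacle. The only points requiring care are (i) applying Parseval correctly for complex-valued summands, as the coefficients $\hat{f}(\xi)$ are in general complex with $\hat{f}(-\xi)=\overline{\hat{f}(\xi)}$, so one should work with $|\hat{f}(\xi)|^2$ rather than $\hat{f}(\xi)^2$; and (ii) noting that the geometric-sum identity holds for an arbitrary modulus $p$, not only for primes. As a sanity check, for $p=2$ the constant $\tfrac{2p}{p-1}$ equals $4$, which recovers the formula $I_i(f)=4\sum_{A\ni i}\hat{f}(A)^2$ from Section \ref{sec:noise}.
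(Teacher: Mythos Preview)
Your proof is correct and follows essentially the same approach as the paper: both use the identity $I_j(f)=\E(f(X)-f(\tilde{\sigma}_j(X)))^2$, expand via the Fourier representation, and reduce to the elementary fact $\E_{\tilde{Z}_j}e_p(\pm\xi_j\tilde{Z}_j)=-1/(p-1)$ for $\xi_j\neq0$. The only cosmetic difference is that the paper expands the square into two diagonal terms and a cross term and evaluates each separately, whereas you condition on $\tilde{Z}_j$ and apply Parseval once to the difference; the underlying computation is identical.
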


\begin{proof}
Since $f$ takes values 0 or 1, one can rewrite $I_i(f)$ as
\begin{equation}\label{eq:I_j}
I_j(f)=\E(f(X)-f(\tilde{\sigma}_j(X)))^2.
\end{equation}
Notice that both $X$ and $\tilde{\sigma}_j(X)$ are uniformly random. By Parseval's identity, we have
\begin{equation}\label{eq:ef2}
\E f(X)^2=\E f(\tilde{\sigma}_j(X)))^2=\sum_{\xi\in (\Z/p\Z)^n}|\hat{f}(\xi)|^2.
\end{equation}
Using the Fourier representation, we have
$$
\E f(X)f(\tilde{\sigma}_j(X)))=\E\sum_{\xi, \eta\in (\Z/p\Z)^n}\hat{f}(\xi)\overline{\hat{f}(\eta)}e_p((\xi-\eta)\cdot X)e_p(-\eta_j\tilde{Z}_j),
$$
where $\overline{\hat{f}(\eta)}$ is the complex conjugate of $\hat{f}(\eta)$.
Since $\tilde{Z}_j$ and $X$ are independent, we have
$$
\E e_p((\xi-\eta)\cdot X)e_p(-\eta_j\tilde{Z}_j)=\E e_p((\xi-\eta)\cdot X)\E e_p(-\eta_j\tilde{Z}_j).
$$
Owing to the orthogonality, $\E e_p((\xi-\eta)\cdot X)$ vanishes if $\xi\neq\eta$. 
One can check that 
$$
\E e_p(-\eta_j\tilde{Z}_j)=
\begin{cases}
1, &\eta_j=0\\ 
-\frac{1}{p-1},  &\eta_j\neq0.
\end{cases}
$$
Therefore, we have
\begin{equation}\label{eq:cross}
\E (f(X)f(\tilde{\sigma}_j(X)))=\sum_{\xi: \xi_j=0}|\hat{f}(\xi)|^2-\frac{1}{p-1}\sum_{\xi: \xi_j\neq0}|\hat{f}(\xi)|^2.
\end{equation}
The desired statement follows from \eqref{eq:I_j}, \eqref{eq:ef2} and \eqref{eq:cross}.
\end{proof}

We now show an analogue of Theorem \ref{thm:high-low}. 

\begin{thm}\label{thm:high-low-1}
Let $\alpha>1$. Let $\E f$ be fixed. When $\epsilon=\epsilon(n)$ is sufficiently small, the quantity $\E(T_\epsilon f)^\alpha$ is maximized by some Boolean function with the least total influence. When $\epsilon=\epsilon(n)$ is sufficiently close to $1-1/p$, the quantity $\E(T_\epsilon f)^\alpha$ is maximized by some Boolean function with the largest degree-1 Fourier weight $W_1(f)=\sum_{\xi: |\supp(\xi)|=1}|\hat{f}(\xi)|^2$.
\end{thm}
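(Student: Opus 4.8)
The plan is to run the argument of Theorem~\ref{thm:high-low} verbatim on $(\Z/p\Z)^n$, with Harper's theorem (unavailable here) replaced by the abstract monotonicity of $\epsilon\mapsto\E(T_\epsilon f)^\alpha$ together with the Fourier formula for $I(f)$ from Proposition~\ref{prop:f-i}. First I would note that, after the change of variable $\delta=1-\tfrac{p\epsilon}{p-1}\in(0,1]$, the family $\{T_\epsilon\}$ is exactly a Markov semigroup: by \eqref{eq:f-noise} the operator $T_\epsilon$ acts on the character $e_p(\xi\cdot\,)$ by the eigenvalue $\delta^{|\supp(\xi)|}$, so $T_\epsilon T_{\epsilon'}=T_{\epsilon''}$ with $\delta''=\delta\delta'$, and writing $t=-\log\delta$ puts us in the situation of \eqref{eq:semi-group}. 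Since $x\mapsto x^\alpha$ is convex on $[0,\infty)$ for $\alpha\ge1$ and $T_\epsilon f\ge0$ when $f$ is Boolean, the computation in Section~\ref{sec:noise} gives that $\E(T_\epsilon f)^\alpha$ is non-increasing in $t$, hence non-increasing in $\epsilon$ on $[0,1-1/p]$, with the $f$-independent endpoint values $\E(T_0f)^\alpha=\E f$ and $\E(T_{1-1/p}f)^\alpha=(\E f)^\alpha$. (We may assume $0<\E f<1$, since otherwise $f$ is constant.)

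For the weak-noise regime, let $Lg=-\sum_\xi|\supp(\xi)|\hat g(\xi)e_p(\xi\cdot\,)$, so that $\frac{d}{d\epsilon}T_\epsilon f=\frac{p}{(p-1)\delta}(L\circ T_\epsilon)f$. Differentiating, using Booleanity ($f^{\alpha-1}=f$) and then Proposition~\ref{prop:f-i},
\begin{align*}
\frac{d}{d\epsilon}\E(T_\epsilon f)^\alpha\Big|_{\epsilon=0}
&=\frac{\alpha p}{p-1}\,\E\big(f^{\alpha-1}Lf\big)=\frac{\alpha p}{p-1}\,\E(fLf)\\
&=-\frac{\alpha p}{p-1}\sum_{\xi}|\supp(\xi)|\,|\hat f(\xi)|^2=-\frac{\alpha}{2}\,I(f).
\end{align*}
There are only finitely many Boolean functions of a given mean and they share the value $\E f$ at $\epsilon=0$; since $\E(T_\epsilon f)^\alpha$ is non-increasing, for $\epsilon$ small enough the comparison between any two of them is decided by this first derivative, so any maximizer must attain the least possible value of $I(f)$.

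For the strong-noise regime, as $\epsilon\to(1-1/p)^-$ we have $\delta\to0^+$; expanding \eqref{eq:f-noise} to first order gives $T_\epsilon f=\E f+\delta\sum_{|\supp(\xi)|=1}\hat f(\xi)e_p(\xi\cdot x)+O(\delta^2)$, with the $O(\cdot)$ depending only on $n,p$, and likewise $(L\circ T_\epsilon)f=-\delta\sum_{|\supp(\xi)|=1}\hat f(\xi)e_p(\xi\cdot x)+O(\delta^2)$. Substituting these into $\frac{d}{d\epsilon}\E(T_\epsilon f)^\alpha=\frac{\alpha p}{(p-1)\delta}\,\E[(T_\epsilon f)^{\alpha-1}(L\circ T_\epsilon)f]$, expanding $(T_\epsilon f)^{\alpha-1}$ around $\E f$, and integrating the character sums over $x$ (the pure first-order sums have mean $0$; the product of the two degree-$1$ sums has mean $\sum_{|\supp(\xi)|=1}|\hat f(\xi)|^2$ by $\hat f(-\xi)=\overline{\hat f(\xi)}$), I obtain
\begin{align*}
\frac{d}{d\epsilon}\E(T_\epsilon f)^\alpha=-\frac{p\,\alpha(\alpha-1)(\E f)^{\alpha-2}}{p-1}\,\delta\sum_{\xi:\,|\supp(\xi)|=1}|\hat f(\xi)|^2+O(\delta^2).
\end{align*}
Integrating from $\epsilon$ to $1-1/p$ and using $\E(T_{1-1/p}f)^\alpha=(\E f)^\alpha$ shows that $\E(T_\epsilon f)^\alpha-(\E f)^\alpha$ equals, to leading order in $\delta$, a positive multiple of $\sum_{|\supp(\xi)|=1}|\hat f(\xi)|^2$; by finiteness again, for $\epsilon$ close enough to $1-1/p$ the maximizer is a Boolean function with maximal degree-$1$ Fourier weight.

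Everything here is a routine transcription of Theorem~\ref{thm:high-low}, and the only mildly delicate bookkeeping is keeping track of complex conjugates in the Fourier sums, which is harmless because $f$ is real. The genuine difference --- and the reason the conclusion is stated less explicitly than in Theorem~\ref{thm:high-low} --- is that on $(\Z/p\Z)^n$ there is no Harper-type isoperimetric theorem identifying the Boolean function of given mean with the least total influence, so one can only assert that \emph{some} such function is the maximizer; and, as in the remark following Theorem~\ref{thm:high-low}, the admissible ranges of $\epsilon$ produced by this argument depend on $n$ and $p$.
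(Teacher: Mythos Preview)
Your proposal is correct and follows essentially the same route as the paper, which merely sketches the proof by saying it ``can be proved in a manner similar to that of Theorem~\ref{thm:high-low}'' and recording the two key formulas $\frac{d}{d\epsilon}\E(T_\epsilon f)^\alpha\big|_{\epsilon=0}=-\alpha I(f)/2$ and the leading term $-\alpha(\alpha-1)(\E f)^{\alpha-2}\frac{p}{p-1}\delta\sum_{|\supp(\xi)|=1}|\hat f(\xi)|^2$ of the derivative near $\epsilon=1-1/p$. You have simply filled in the details the paper omits---the semigroup reparametrization, the explicit use of finiteness to pass from derivative comparison to function comparison, and the bookkeeping with complex Fourier coefficients---and your closing remark about the absence of a Harper-type theorem correctly explains why the statement here is less explicit than Theorem~\ref{thm:high-low}.
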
 
\begin{proof}
The statement can be proved in a manner similar to that of Theorem \ref{thm:high-low}. We only give a sketch. The function $\E(T_\epsilon f)^\alpha$ is  decreasing for $\epsilon\in[0, 1-1/p]$. We have fixed initial value $\E(T_0 f)^\alpha=\E f$ and ending value $\E(T_{1-1/p} f)^\alpha=(\E f)^\alpha$. In the low noise case, the equation 
$$
\frac{d}{d\epsilon}\E(T_\epsilon f)^\alpha\big|_{\epsilon=0}=-\alpha I(f)/2
$$ 
still holds with total influence $I(f)$ given in Definition \ref{defn:influence-1}. When $\epsilon=\epsilon(n)$ is close to $1-1/p$, one can check that the leading term of $\frac{d}{d\epsilon}\E(T_\epsilon f)^\alpha$ is
$$
-\alpha(\alpha-1)(\E f)^{\alpha-2}\frac{p}{p-1}\left(1-\frac{p\epsilon}{p-1}\right)\sum_{\xi: |\supp(\xi)|=1}|\hat{f}(\xi)|^2.
$$
Then the statement easily follows.
\end{proof}

The following is an analogy of Theorem \ref{thm:mono}

\begin{thm}\label{thm:mono-1}
Let $\Phi$ be a convex function. For fixed mean $\E f$, the quantity $\E\Phi(T_\epsilon f)$ is maximized by some monotone function. 
\end{thm}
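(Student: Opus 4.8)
The plan is to repeat the compression argument in the proof of Theorem \ref{thm:mono} almost verbatim; the only new feature is that a coordinate now takes $p$ values, so ``pushing a coordinate up'' must be carried out by a sequence of elementary moves, each swapping two adjacent values $j$ and $j+1$ of $\{0,1,\dots,p-1\}$ along one fixed coordinate. Throughout, ``monotone'' is meant with respect to the coordinatewise partial order induced by $0\prec 1\prec\cdots\prec p-1$.

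First I would fix the first coordinate and a pair $j,j+1$. Write $x=(x_1,x')$ with $x'\in(\Z/p\Z)^{n-1}$, set $S=\supp f$, and let $B_j$ (resp.\ $C_j$) be the set of $x'$ with $(j,x')\in S$, $(j+1,x')\notin S$ (resp.\ $(j,x')\notin S$, $(j+1,x')\in S$). Let $g$ be obtained from $f$ by moving $(j,x')\mapsto(j+1,x')$ for each $x'\in B_j$, and $h$ by moving $(j+1,x')\mapsto(j,x')$ for each $x'\in C_j$; both are Boolean with the same mean as $f$. Using that $\P(Y_1=b|X_1=a)$ equals $1-\epsilon$ if $a=b$ and $\epsilon/(p-1)$ otherwise, a short computation shows that $T_\epsilon g$ agrees with $T_\epsilon f$ except at the first-coordinate values $j$ and $j+1$, where
$$
T_\epsilon g(j,x')=T_\epsilon f(j,x')-\delta\beta(x'),\qquad T_\epsilon g(j+1,x')=T_\epsilon f(j+1,x')+\delta\beta(x'),
$$
with $\delta=1-p\epsilon/(p-1)$ the single-coordinate factor appearing in \eqref{eq:f-noise} and $\beta(x')=\P(Y'\in B_j|X'=x')$; symmetrically $T_\epsilon h$ agrees with $T_\epsilon f$ except that $T_\epsilon h(j,x')=T_\epsilon f(j,x')+\delta\gamma(x')$ and $T_\epsilon h(j+1,x')=T_\epsilon f(j+1,x')-\delta\gamma(x')$, where $\gamma(x')=\P(Y'\in C_j|X'=x')$. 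Combining this with the elementary identity $T_\epsilon f(j,x')-T_\epsilon f(j+1,x')=\delta(\beta(x')-\gamma(x'))$, one checks (i) the pointwise convex decomposition $T_\epsilon f(x)=\theta(x')T_\epsilon g(x)+(1-\theta(x'))T_\epsilon h(x)$ with weight $\theta(x')=\gamma(x')/(\beta(x')+\gamma(x'))$ depending only on $x'$ (taking $\theta(x')$ arbitrary when $\beta(x')+\gamma(x')=0$, which forces $T_\epsilon f,T_\epsilon g,T_\epsilon h$ to coincide at $(j,x')$ and $(j+1,x')$), and (ii) that $T_\epsilon h(\cdot,x')$ is obtained from $T_\epsilon g(\cdot,x')$ by transposing the entries at first coordinate $j$ and $j+1$, so that $\sum_{x_1}\Phi(T_\epsilon g(x_1,x'))=\sum_{x_1}\Phi(T_\epsilon h(x_1,x'))$. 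Applying the convex $\Phi$ to (i), summing over $x_1$, and using (ii) gives $\sum_{x_1}\Phi(T_\epsilon f(x_1,x'))\le\sum_{x_1}\Phi(T_\epsilon g(x_1,x'))$, and averaging over $x'$ yields $\E\Phi(T_\epsilon f)\le\E\Phi(T_\epsilon g)$; thus every elementary compression is superior.

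Finally I would iterate, cycling through all coordinates $i\in[n]$ and all adjacent pairs $(j,j+1)$ repeatedly. The potential $\sum_{s\in S}\sum_{i=1}^n s_i$ is bounded and strictly increases (by $|B_j|$) at every nontrivial compression, so after finitely many passes every elementary compression has become trivial; triviality of the compression for coordinate $i$ and pair $(j,j+1)$ says exactly that any $x\in S$ with $x_i=j$ forces the point with $i$-th coordinate raised to $j+1$ into $S$, so $S$ is then an up-set in every coordinate and the limiting function is monotone, while $\E\Phi(T_\epsilon f)$ never decreased. I expect the main obstacle to be establishing (i) and (ii): now that a coordinate fibre of $f$ is an arbitrary subset of $\Z/p\Z$ rather than one of $\emptyset,\{0\},\{1\},\{0,1\}$ as on the Boolean cube, one must still exhibit a convex decomposition of $T_\epsilon f$ whose weight is independent of the moved coordinate and whose two ``pushed'' functions are equidistributed in that coordinate — this is precisely what makes the convexity step close. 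The bookkeeping for the iteration (termination, and the fact that a common fixed point of all elementary compressions is genuinely, not just locally, monotone) is routine once the potential is in place.
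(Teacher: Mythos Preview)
Your proof is correct and follows essentially the same compression/convex-combination argument as the paper. The only cosmetic differences are that the paper performs the swap for an arbitrary pair $j<k$ rather than just adjacent $(j,j+1)$ (which works for the same reason, since $\P(Y_1=b\mid X_1=a)$ is constant off the diagonal), and that the paper omits your explicit potential-function termination argument, simply saying ``repeat''.
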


\begin{proof}
We only need to slightly modify the proof of Theorem \ref{thm:mono}.
Suppose that $f$ is supported on $S$. For each pair $j, k\in\Z\slash p\Z$ such that $j<k$, we define
\begin{align*}
B_{j, k} &=\{x_2^n\in (\Z\slash p\Z)^{n-1}: (j, x_2^n)\in S, (k, x_2^n)\notin S\},\\
C_{j, k} &=\{x_2^n\in (\Z\slash p\Z)^{n-1}: (j, x_2^n)\notin S, (k, x_2^n)\in S\}.
\end{align*}
Let $A_{j, k}=S\backslash((\{j\}\times B_{j, k})\cup (\{k\}\times C_{j, k}))$.
Let $g_{j, k}$ be the Boolean function supported on $S_{j, k}'=A_{j, k}\cup (\{k\}\times (B_{j, k}\cup C_{j, k}))$. It is clear that $|S|=|S_{j, k}'|$,  and that $f$ and $g_{j, k}$ have the same mean. We claim that $g_{j, k}$ is superior to $f$, i.e., $\E\varphi(T_\epsilon f)\leq \E\varphi(T_\epsilon g_{j, k})$. Let $h_{j, k}$ be the Boolean function with support $S_{j, k}''=A_{j, k}\cup (\{j\}\times (B_{j, k}\cup C_{j, k}))$. For any $x\in(\Z\slash p\Z)^n$, the following identity still holds
\begin{equation}\label{eq:convex-comb-1}
T_\epsilon f(x)=\theta T_\epsilon g(x)+(1-\theta)T_\epsilon h(x),
\end{equation}
where $\theta$ depends on $x_2^n$. For $x=(j, x_2^n)$ we have
\begin{align*}
T_\epsilon f(x) &= \P(Y\in A_{j, k}|X=(j, x_2^n))\nonumber\\
& ~~~+\P(Y_1=j, Y_2^n\in B_{j, k}|X=(j, x_2^n))\nonumber\\
& ~~~+\P(Y_1=k, Y_2^n\in C_{j, k}|X=(j, x_2^n))\nonumber\\
&= \P(Y\in A_{j, k}|X=(j, x_2^n))\nonumber\\
& ~~~+\P(Y_1=k, Y_2^n\in B_{j, k}|X=(j, x_2^n))\nonumber\\
& ~~~+\P(Y_1=k, Y_2^n\in C_{j, k}|X=(j, x_2^n))\nonumber\\
&~~~+(1-p\epsilon/(p-1))\P(Y_2^n\in B_{j, k}|X_2^n=x_2^n)\nonumber\\
&= T_\epsilon g_{j, k}(x)+\left(1-\frac{p\epsilon}{p-1}\right)\P(Y_2^n\in B_{j, k}|X_2^n=x_2^n).
\end{align*}
Similarly, we have
$$
T_\epsilon f(x)=T_\epsilon h_{j, k}(x)-\left(1-\frac{p\epsilon}{p-1}\right)\P(Y_2^n\in C_{j, k}|X_2^n=x_2^n).
$$
Therefore, identity \eqref{eq:convex-comb-1} holds with 
$$
\theta=\frac{\P(Y_2^n\in C_{j, k}|X_2^n=x_2^n)}{\P(Y_2^n\in B_{j, k}|X_2^n=x_2^n)+\P(Y_2^n\in C_{j, k}|X_2^n=x_2^n)}.
$$
The case  $x=(k, x_2^n)$ can be checked in the same manner. When $x_1\neq j, k$, we have $T_\epsilon f(x)=T_\epsilon g_{j, k}(x)=T_\epsilon h_{j, k}(x)$. Hence, we first apply the convex function $\Phi$ to \eqref{eq:convex-comb-1}, and then average both sides over the first bit. Then we have
\begin{equation}\label{eq:avg-1st-1}
\E\Phi(T_\epsilon f(X_1, x_2^n))\leq\theta\E\Phi(T_\epsilon g_{j, k}(X_1, x_2^n))+(1-\theta)\E\Phi(T_\epsilon h_{j, k}(X_1, x_2^n)).
\end{equation}
Similarly, we have
$$
\E\Phi(T_\epsilon g_{j, k}(X_1, x_2^n))=\E\Phi(T_\epsilon h_{j, k}(X_1, x_2^n)),
$$
which follows from
$$
T_\epsilon g_{j, k}(j, x_2^n)=T_\epsilon h(k, x_2^n).
$$
$$
T_\epsilon g_{j, k}(k, x_2^n)=T_\epsilon h(j, x_2^n).
$$
and that $T_\epsilon g_{j, k}(x)=T_\epsilon h_{j, k}(x)$ for $x_1\neq j, k$. Then inequality \eqref{eq:avg-1st-1} becomes
$$
\E\Phi(T_\epsilon f(X_1, x_2^n))\leq\E\Phi(T_\epsilon g_{j, k}(X_1, x_2^n)).
$$
We will have $\E\Phi(T_\epsilon f)\leq \E\Phi(T_\epsilon g_{j, k})$ by averaging over $x_1^n$. Repeat the argument for all such pairs $(j, k)$ and the last $n-1$ coordinates. We will arrive at a monotone function. 
\end{proof}

\subsubsection{Noise: type II}

In some sense, our results in the previous sub-subsection rely on the algebraic or group feature of the discrete torus. This is also the reason why, under type-I noise, we do not have a geometric interpretation of the total influence for $p>2$. This motivates us to consider another type of noise. We adapt the noise distribution defined in \eqref{eq:noise-1} as follows
\begin{equation}\label{eq:noise-2}
\P(Z_1=\ell)=
\left\{  \begin{array}{ll}
1-\epsilon, & \ell=0\\
\epsilon/2, & \ell=1, p-1.
    \end{array} \right.
\end{equation}
Since $-1=p-1$ in $\Z\slash p\Z$, the above noise only changes an alphabet to its nearest values. Analogous to \eqref{eq:noise-fourier-1}, we have the following Fourier representation 
\begin{equation}\label{eq:f-noise-1}
T_\epsilon f(x)=\sum_{\xi\in (\Z/p\Z)^n}\prod_{j=1}^n[1-\epsilon(1-\cos(2\pi\xi_j/p))]\hat{f}(\xi)e_p(\xi\cdot x).
\end{equation}

Recall our definition of $\tilde{Z}_j$ in \eqref{eq:z tilde}. Under the noise in \eqref{eq:noise-2}, $\tilde{Z}_j$ is a Bernoulli random variable taking $1$ and $-1$ with equal probability. 
In this case, we can connect the influence in Definition \ref{defn:influence-1} to edge boundary as in the discrete cube setting.
Let $S$ be the support of $f$. We define the $j$-th direction edge boundary
$$
\partial_jS=\{(x, y): x_j-y_j\in\{\pm 1\}, x_k=y_k~\text{for}~k\neq j\}
$$
and the edge boundary $\partial S=\cup_{j=1}^n\partial_jS$. Analogous to \eqref{eq:geo-i-influence} and \eqref{eq:geo-influence}, we have the following relation between edge boundary and influence
\begin{align} 
I_j(f) &=\frac{|\partial_j S|}{p^n}, \label{eq:geo-i-influence-1}\\
I(f) &=\frac{|\partial S|}{p^n}. \label{eq:geo-influence-1}
\end{align}
An alert reader may have noticed that, taking $p=2$,  identities \eqref{eq:geo-i-influence-1} and \eqref{eq:geo-influence-1} do not match \eqref{eq:geo-i-influence} and \eqref{eq:geo-influence}, respectively. This is because, for $p=2$, our definition \eqref{eq:noise-2} does not yield a probability distribution, since $\epsilon/2$ mass is missing. 

Analogous to Proposition \ref{prop:f-i}, we have the following Fourier representation of influence. 
\begin{prop}\label{prop:f-i-1}
Let $f:(\Z\slash p\Z)^n\to\{0, 1\}$ be a Boolean function. Then we have
\begin{align*}
I_j(f) &=2\sum_{\xi\in (\Z/p\Z)^n}(1-\cos(2\pi\xi_j/p))|\hat{f}(\xi)|^2,\\
I(f) &=2\sum_{\xi\in (\Z/p\Z)^n}\sum_{j=1}^n(1-\cos(2\pi\xi_j/p))|\hat{f}(\xi)|^2.
\end{align*}
\end{prop}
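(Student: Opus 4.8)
The plan is to follow the proof of Proposition \ref{prop:f-i} almost verbatim; the only genuinely new ingredient is the characteristic function of the modified flipping variable $\tilde{Z}_j$. First, since $f$ is $\{0,1\}$-valued, $(f(X)-f(\tilde{\sigma}_j(X)))^2$ is the indicator of $\{f(X)\neq f(\tilde{\sigma}_j(X))\}$, so $I_j(f)=\E(f(X)-f(\tilde{\sigma}_j(X)))^2$, where $X$ is uniform on $(\Z\slash p\Z)^n$ and $\tilde{Z}_j$ is independent of $X$. Expanding the square gives $\E f(X)^2+\E f(\tilde{\sigma}_j(X))^2-2\E f(X)f(\tilde{\sigma}_j(X))$.

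Next, observe that $\tilde{\sigma}_j(X)$ is again uniform on $(\Z\slash p\Z)^n$, since adding an independent shift in one coordinate preserves uniformity; hence by Parseval's identity both $\E f(X)^2$ and $\E f(\tilde{\sigma}_j(X))^2$ equal $\sum_\xi|\hat{f}(\xi)|^2$. For the cross term, expand $f$ in the Fourier basis, using reality of $f$ to write $f(\tilde{\sigma}_j(X))=\sum_\eta\overline{\hat{f}(\eta)}e_p(-\eta\cdot\tilde{\sigma}_j(X))$, and write $\tilde{\sigma}_j(X)=X+\tilde{Z}_j e_j$ with $e_j$ the $j$-th coordinate vector, so that $e_p(\xi\cdot X)e_p(-\eta\cdot\tilde{\sigma}_j(X))=e_p((\xi-\eta)\cdot X)e_p(-\eta_j\tilde{Z}_j)$. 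Taking expectations and using the independence of $X$ and $\tilde{Z}_j$, the factor $\E e_p((\xi-\eta)\cdot X)$ vanishes unless $\xi=\eta$ by orthogonality, and on the diagonal one is left with $\E e_p(-\xi_j\tilde{Z}_j)$. This is the one computation that differs from Proposition \ref{prop:f-i}: since $\tilde{Z}_j$ takes the values $+1$ and $-1$ each with probability $1/2$,
\[
\E e_p(-\xi_j\tilde{Z}_j)=\frac{1}{2}\big(e_p(\xi_j)+e_p(-\xi_j)\big)=\cos(2\pi\xi_j/p),
\]
so that $\E f(X)f(\tilde{\sigma}_j(X))=\sum_\xi\cos(2\pi\xi_j/p)\,|\hat{f}(\xi)|^2$.

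Combining the three pieces yields
\[
I_j(f)=2\sum_\xi|\hat{f}(\xi)|^2-2\sum_\xi\cos(2\pi\xi_j/p)|\hat{f}(\xi)|^2=2\sum_\xi\big(1-\cos(2\pi\xi_j/p)\big)|\hat{f}(\xi)|^2,
\]
and summing over $j\in[n]$ and invoking Definition \ref{defn:influence-1} gives the stated formula for $I(f)$. There is no real obstacle here: the argument is a routine adaptation of Proposition \ref{prop:f-i}, and the only point worth isolating is that replacing the ``uniform on $\Z_p\backslash\{0\}$'' flip by the ``$\pm1$'' flip replaces the eigenvalue $-1/(p-1)$ by $\cos(2\pi\xi_j/p)$, which is precisely the per-coordinate factor appearing in the Fourier representation \eqref{eq:f-noise-1} of the modified noise operator; the consistency of these two computations is the sanity check to keep in mind.
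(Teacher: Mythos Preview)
Your proof is correct and is exactly the approach the paper has in mind: the paper does not write out a separate proof of Proposition~\ref{prop:f-i-1} but merely says it follows ``similar to Proposition~\ref{prop:f-i},'' and your computation of the characteristic function $\E e_p(-\xi_j\tilde{Z}_j)=\cos(2\pi\xi_j/p)$ is precisely the one modification needed. (Note also that the $\eta_j$ in the displayed statement is a typo for $\xi_j$, which you have silently corrected.)
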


The following statement can be proved in the same manner as that of Theorem \ref{thm:high-low}.

\begin{thm}\label{thm:high-low-2}
Let $\alpha>1$. Let $\E f$ be fixed. When $\epsilon=\epsilon(n)$ is sufficiently small, the quantity $\E(T_\epsilon f)^\alpha$ is maximized by some Boolean function with the least total influence, i.e., some Boolean function supported on a set with the least edge boundary.
\end{thm}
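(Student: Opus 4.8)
The plan is to transcribe the proof of Theorem \ref{thm:high-low} to the nearest-neighbour torus model, with Harper's theorem replaced by the elementary geometric identity $I(f)=|\partial S|/p^n$ recorded above, and with Proposition \ref{prop:f-i-1} playing the role of the Fourier formula for the cube influence. First I would differentiate $\epsilon\mapsto\E(T_\epsilon f)^\alpha$ at $\epsilon=0$. Since $T_\epsilon f$ is a convex combination of values in $\{0,1\}$, it lies in $[0,1]$, so for $\alpha>1$ the formula $\frac{d}{d\epsilon}\E(T_\epsilon f)^\alpha=\alpha\E\big((T_\epsilon f)^{\alpha-1}\frac{d}{d\epsilon}T_\epsilon f\big)$ is legitimate. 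Evaluating at $\epsilon=0$, using $T_0f=f$ together with the Booleanity of $f$ (so that $(T_0f)^{\alpha-1}=f$ pointwise), and differentiating the multiplier $\prod_{j=1}^n[1-\epsilon(1-\cos(2\pi\xi_j/p))]$ in \eqref{eq:f-noise-1} termwise at $\epsilon=0$, I obtain
\begin{align}\label{eq:torus-der0}
\frac{d}{d\epsilon}\E(T_\epsilon f)^\alpha\Big|_{\epsilon=0}=-\alpha\,\E(fLf),\qquad Lf(x):=\sum_\xi\Big(\sum_{j=1}^n(1-\cos(2\pi\xi_j/p))\Big)\hat{f}(\xi)\,e_p(\xi\cdot x),
\end{align}
where $L$ is the discrete Laplacian of $(\Z/p\Z)^n$ attached to the nearest-neighbour graph.

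Next I would identify $\E(fLf)$ with the total influence. The multiplier of $L$ is real and even in $\xi$, hence $Lf$ is real, and Parseval's identity gives $\E(fLf)=\sum_\xi\big(\sum_{j=1}^n(1-\cos(2\pi\xi_j/p))\big)|\hat{f}(\xi)|^2$, which is precisely $\tfrac12 I(f)$ by Proposition \ref{prop:f-i-1}. Therefore \eqref{eq:torus-der0} reads $\frac{d}{d\epsilon}\E(T_\epsilon f)^\alpha|_{\epsilon=0}=-\tfrac{\alpha}{2}I(f)$, so among Boolean functions of a fixed mean the first-order coefficient in $\epsilon$ is largest exactly for those of smallest total influence; by the identity $I(f)=|\partial S|/p^n$, these are the functions supported on a set of the prescribed size with the fewest boundary edges. (This is why the statement only names the extremal set by its boundary size: unlike on the cube, identifying it explicitly would require an isoperimetric theorem on the torus.)

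Finally I would upgrade this first-order statement to a genuine maximization for small $\epsilon$. For every Boolean $f$ with the prescribed mean one has $\E(T_0f)^\alpha=\E f^\alpha=\E f$, the same constant, so $\E(T_\epsilon f)^\alpha=\E f-\tfrac{\alpha}{2}I(f)\,\epsilon+O(\epsilon^2)$; since there are only finitely many Boolean functions with that mean, the $O(\epsilon^2)$ remainder is uniform over them, and hence for all sufficiently small $\epsilon>0$ every maximizer of $\E(T_\epsilon f)^\alpha$ has the minimal possible total influence. The step that will need the most care is precisely this last one: unlike the cube ($p=2$), the family $\{T_\epsilon\}$ on $(\Z/p\Z)^n$ is not a one-parameter Markov semigroup once $p\geq4$, since its multipliers $\prod_j(1-\epsilon(1-\cos(2\pi\xi_j/p)))$ are not of the form $e^{-t(\epsilon)\mu_\xi}$, so one cannot simply quote ``$\E(T_\epsilon f)^\alpha$ is decreasing in $\epsilon$'' as in Theorem \ref{thm:high-low}; finiteness of the configuration space is what replaces monotonicity, and everything else is a routine copy of the cube argument.
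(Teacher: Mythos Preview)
Your proposal is correct and follows the paper's own route: the paper gives no separate proof here but simply refers back to Theorem~\ref{thm:high-low}, whose core is exactly the derivative computation $\frac{d}{d\epsilon}\E(T_\epsilon f)^\alpha\big|_{\epsilon=0}=-\tfrac{\alpha}{2}I(f)$ together with the common initial value $\E(T_0f)^\alpha=\E f$. Your closing remark---that for $p\ge4$ the nearest-neighbour family $\{T_\epsilon\}$ is not a one-parameter Markov semigroup, so one should invoke finiteness of the set of Boolean functions with a given mean rather than the ``decreasing in $\epsilon$'' line used on the cube---is a valid clarification of a point the paper's ``same manner'' leaves implicit.
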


\begin{rmk}
Bollob\'{a}s and Leader \cite{BL91} proved sharp edge isoperimetric inequalities for the discrete torus and the grid (Theorem 8 and Theorem 3, respectively). When the subset possesses certain type of cardinalities, they have characterization of the extremal set; but, in general, they do not know which set to take, although they know the sharp bound of the edge boundary of the extremal sets.   
\end{rmk}

\begin{rmk}
Theorem \ref{thm:high-low-1} and Theorem  \ref{thm:high-low-2} characterize maxmizers in Fourier analytic and geometric ways, respectively. This difference results from that noise operator in Definition \ref{defn:noise-1} under two type of noises \eqref{eq:noise-1} and \eqref{eq:noise-2} captures algebraic/group and geometric/graphic features of discrete torus, respectively.
\end{rmk}

\begin{rmk}
We have the following analogue of Theorem \ref{thm:high-low-2} for general Markov semi-groups $(P_t)_{t\geq0}$ defined in \eqref{eq:semi-group}. When $t>0$ is sufficiently small, $\E(P_t f)^\alpha$ is maximized by some Boolean function supported on a set with the least edge boundary. This follows from the relation
$$
\frac{d}{dt}\E(P_t f)^\alpha\Big|_{t=0}=-\alpha\E (fLf)=-\frac{\alpha}{2} \frac{|\partial S|}{|E|},
$$
where $L$ is the Laplacian and $|E|$ is the number of edges of the graph $G$.
\end{rmk}

\subsection{Tree}

Now we discuss the problem of noise stability in a network in terms of a tree, which gives the geometry of the problem. This was initially proposed by Mossel et al. \cite{MODRSS06} for the NICD problem.
 
We denote by $T$ an undirected tree, which gives the geometry of the problem. The edges of $T$ will be thought of as independent memoryless BSC($\epsilon$) channels with the cross-over probability $\epsilon\in[0, 1/2]$. Let $V$ denote the vertices of $T$. We refer to $S\subset V$ as the locations of the players. Some vertex $u$ of $T$ broadcasts a uniformly random string $X^u\in\{0, 1\}^n$. This string follows
the BSC($\epsilon$) edges of $T$  and eventually reaches all vertices. It is easy to see that the choice $u$ does not matter, in the
sense that the resulting joint probability distribution on strings for all vertices
is the same regardless of $u$.
Upon receiving their
strings $Y^v\in\{0, 1\}^n, v\in S$, each player applies a balanced Boolean function $f_v: \{0, 1\}^n\mapsto \{0, 1\}$, producing one output
bit. As usual, the goal of the players is to maximize 
$$
\E\prod_{v\in S}f_v(Y^v)=\P(f_v(Y^v)=1, v\in S)
$$ 
without
any further communication. Note that the problem of $\alpha$-stability with $\alpha=k$
studied in Section \ref{sec:main} is just this generalized noise stability on the star graph of $k+1$ vertices with the players
at the $k$ leaves.

 In the case of NICD on the path graph, Mossel et al. \cite{MODRSS06} proved  that the best strategy for all players is to use the same dictator function (see Theorem 5.1). In the general case, they showed that there always exists an optimal protocol in which all players use monotone functions (see Theorem 6.3). A careful check of their proofs shows that their arguments also yield the following analogues on the problem of noise stability. Hence, we omit the proofs.

\begin{thm}
Suppose that $T$ is a path of length $k$ on the set $\{0, 1, \cdots, k\}$. Let $S=\{i_0, \cdots, i_l\}$ be a subset of size at least two. Then we have
$$
\E\prod_{v\in S}f_v(Y^v)\leq 2^{-(l+1)}\prod_{j=1}^l(1+(1-2\epsilon)^{i_j-i_{j-1}}).
$$
Equality is achieved if and only if $f_v$ are the identical dictator function.
\end{thm}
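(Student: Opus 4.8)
The plan is to reduce the problem on the path to an iterated application of the two-player (i.e., single-edge) case, exploiting the Markov structure of the broadcast process along the path. First I would set up the conditioning: label the path vertices $0,1,\dots,k$ with the edge between $j-1$ and $j$ a $\text{BSC}(\epsilon)$, and recall that the string $Y^{i_0}$ at the first player's location may be taken as the ``source'' (the choice of broadcast vertex is irrelevant). Then, conditioned on $Y^{i_0}$, the strings $Y^{i_1},\dots,Y^{i_l}$ at the remaining player locations form a Markov chain: $Y^{i_j}$ is obtained from $Y^{i_{j-1}}$ by passing through the concatenation of $i_j-i_{j-1}$ independent $\text{BSC}(\epsilon)$ channels, which is itself a $\text{BSC}(\epsilon_j)$ with $1-2\epsilon_j=(1-2\epsilon)^{i_j-i_{j-1}}$. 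Equivalently, in operator language, $\E[f_{i_j}(Y^{i_j})\mid Y^{i_{j-1}}=y]=T_{\epsilon_j}f_{i_j}(y)$.

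The core of the argument is then to peel off the players one at a time, starting from the far end of the path. Writing $\rho_j=(1-2\epsilon)^{i_j-i_{j-1}}$, I would condition on $Y^{i_{l-1}}$ and note
\begin{align*}
\E\prod_{j=0}^{l}f_{i_j}(Y^{i_j})
&=\E\Big[\Big(\prod_{j=0}^{l-1}f_{i_j}(Y^{i_j})\Big)\,\E\big[f_{i_l}(Y^{i_l})\mid Y^{i_{l-1}}\big]\Big]\\
&=\E\Big[\Big(\prod_{j=0}^{l-1}f_{i_j}(Y^{i_j})\Big)\,T_{\epsilon_l}f_{i_l}(Y^{i_{l-1}})\Big].
\end{align*}
Now $T_{\epsilon_l}f_{i_l}=\E f_{i_l}+\sum_{|A|\ge1}\rho_l^{|A|}\hat f_{i_l}(A)W_A$; since $f_{i_l}$ is balanced, $\E f_{i_l}=1/2$, and since $T_{\epsilon_l}f_{i_l}$ takes values in $[0,1]$, the deviation from $1/2$ is controlled. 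The clean way to get the bound is to observe that among functions taking values in $[0,1]$ with mean $1/2$, the expression is an affine (hence monotone in the appropriate direction) functional of the ``high-degree'' part, and by Proposition~\ref{prop:holder} applied on the single edge, the optimum over the last player is the dictator function, contributing exactly a factor $\tfrac12(1+\rho_l)$ while leaving a structurally identical sub-problem on the shorter path $\{i_0,\dots,i_{l-1}\}$ with player functions $f_{i_0},\dots,f_{i_{l-2}}, f_{i_{l-1}}\cdot(\text{const})$; one then iterates. The base case $l=1$ is the two-player bound $\E f(Y^1)g(Y^2)=\sum_A\rho^{|A|}\hat f(A)\hat g(A)\le \tfrac12(1+\rho)$ for balanced $f,g$, with equality iff both are the same dictator, which follows from Cauchy--Schwarz together with $\sum_{|A|\ge1}\hat f(A)^2=1/4$.

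For the equality characterization I would track the Cauchy--Schwarz/Hölder equality conditions through the induction: equality at step $j$ forces $T_{\epsilon_j}f_{i_j}$ restricted to degree $\le1$ to be supported on a single coordinate matching the running dictator, and invertibility of the noise operator (as used in the proof of Proposition~\ref{prop:holder}) promotes this to $f_{i_j}$ itself being that dictator. The main obstacle I anticipate is the bookkeeping in the inductive step: after peeling off a player, the remaining product is no longer a product of Boolean functions but involves $T_{\epsilon_l}f_{i_l}$, a $[0,1]$-valued function, so one must carry the induction hypothesis in the right generality --- namely, a bound of the stated form for $\E\big[\prod_{j<l}f_{i_j}(Y^{i_j})\cdot g(Y^{i_{l-1}})\big]$ where $g:\{0,1\}^n\to[0,1]$ has mean $1/2$ --- and verify that peeling preserves exactly this hypothesis. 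Once the right statement is isolated, each step is a one-edge estimate, and the product telescopes to $2^{-(l+1)}\prod_{j=1}^l(1+(1-2\epsilon)^{i_j-i_{j-1}})$. Since, as the authors remark, this is a transcription of Mossel et al.'s argument for the NICD version, I would expect no genuinely new difficulty beyond adapting their path-peeling lemma to the present normalization.
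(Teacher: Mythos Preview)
The paper does not give its own proof of this theorem; it explicitly omits the argument and points to Theorem~5.1 of Mossel et al.\ \cite{MODRSS06}. So there is no in-paper proof to compare against, and the relevant question is whether your sketch would actually go through.

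Your overall setup is right: the path gives a Markov chain $Y^{i_0}\to Y^{i_1}\to\cdots\to Y^{i_l}$, the step from $i_{j-1}$ to $i_j$ is a BSC with effective correlation $\rho_j=(1-2\epsilon)^{i_j-i_{j-1}}$, and conditioning on $Y^{i_{l-1}}$ turns the last factor into $T_{\epsilon_l}f_{i_l}(Y^{i_{l-1}})$. The two-player base case is also fine. The gap is in the inductive step. Two concrete issues:
\begin{itemize}
\item Proposition~\ref{prop:holder} concerns the star model (all $Y^j$ conditionally independent given a common $X$) and says the joint optimum uses the same function; it does not tell you that, with $f_{i_0},\dots,f_{i_{l-1}}$ held fixed, the best choice of $f_{i_l}$ on a single edge is a dictator. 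In fact that claim is false: the linear functional $f_{i_l}\mapsto \E\big[\prod_{j<l}f_{i_j}(Y^{i_j})\,T_{\epsilon_l}f_{i_l}(Y^{i_{l-1}})\big]$ is maximized (over balanced Boolean $f_{i_l}$) by the indicator of the top half of the values of $T_{\epsilon_l}\Psi$, where $\Psi(y)=\E[\prod_{j<l}f_{i_j}(Y^{i_j})\mid Y^{i_{l-1}}=y]$, and this need not be a dictator when the other $f$'s are not.
\item The ``structurally identical sub-problem'' you describe does not arise. Even when $f_{i_l}$ is a dictator, $T_{\epsilon_l}f_{i_l}$ is not constant, so peeling does not produce $f_{i_{l-1}}\cdot(\text{const})$ at node $i_{l-1}$. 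The natural object left behind is $f_{i_{l-1}}\cdot T_{\epsilon_l}f_{i_l}$, which is $[0,1]$-valued with mean at most $(1+\rho_l)/4$, not $1/2$; so your proposed strengthening to ``$g\in[0,1]$ with mean $1/2$'' is not preserved by the peeling and the induction does not close as stated.
\end{itemize}
What is needed is a genuinely two-parameter inductive hypothesis that survives the recursion $\phi_{j-1}=f_{i_{j-1}}\cdot T_{\epsilon_j}\phi_j$ (tracking, e.g., both $\E\phi_j$ and a second moment quantity), or the specific lemma Mossel et al.\ use to collapse two adjacent players on a path into one while extracting exactly the factor $(1+\rho_l)/2$. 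Your sketch correctly anticipates that this bookkeeping is the crux, but the particular fix you propose does not do the job; you should consult the actual peeling lemma in \cite{MODRSS06} and carry that statement through.
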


\begin{thm}
For any tree $T$, the maximal correlation $\E\prod_{v\in S}f_v(Y^v)$ can be achieved by some monotone Boolean function.
\end{thm}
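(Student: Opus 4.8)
The plan is a simultaneous monotone shift (compression) argument, in the spirit of the proof of Theorem~\ref{thm:mono}, with the positive association of the tree-indexed noise process as the essential new ingredient. Fix a coordinate $i\in[n]$. For each player $v$ and each Boolean $f_v$, define the shift $M_if_v$ exactly as the map $f\mapsto g$ in the proof of Theorem~\ref{thm:mono}: on every line $\{x:x_j=a_j\text{ for }j\ne i\}$ in direction $i$ on which $f_v$ equals $1$ at the point with $x_i=0$ and $0$ at the point with $x_i=1$, move the support to the point with $x_i=1$; leave all other lines unchanged. Then $M_i$ is a bijection of $\{0,1\}^n$ preserving $|S_{f_v}|$, hence preserving $\E f_v$ and balancedness, and $M_if_v$ is monotone in the $i$-th variable. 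The key claim is that applying $M_i$ simultaneously to all players cannot decrease the objective:
$$
\E\prod_{v\in S}f_v(Y^v)\ \le\ \E\prod_{v\in S}(M_if_v)(Y^v).
$$

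To prove the claim, condition on $\mathbf Y^S_{-i}:=(Y^v_j)_{v\in S,\,j\ne i}$. Since the broadcast string has i.i.d.\ coordinates and the edge channels act coordinatewise, the coordinate-processes $(Y^v_j)_{v\in V}$, $j=1,\dots,n$, are mutually independent; thus given $\mathbf Y^S_{-i}$ the remaining randomness is the one-bit tree-indexed $\mathrm{BSC}(\epsilon)$ process $(Y^v_i)_{v\in S}$, with some law $\nu_S$ on $\{0,1\}^S$ independent of the conditioning, while each $f_v$ restricts to a fixed $\phi_v\colon\{0,1\}\to\{0,1\}$ and $M_if_v$ restricts to $\widehat{\phi_v}$, the monotonization of $\phi_v$ on the two-point set (the map $y\mapsto 1-y$ is replaced by the identity, the other three cases unchanged). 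It therefore suffices to show, for each value of $\mathbf Y^S_{-i}$,
$$
\E_{(y^v)\sim\nu_S}\prod_{v\in S}\phi_v(y^v)\ \le\ \E_{(y^v)\sim\nu_S}\prod_{v\in S}\widehat{\phi_v}(y^v).
$$
Both sides vanish unless no $\phi_v$ is identically $0$; assuming this, write $P=\{v:\phi_v=\mathrm{id}\}$, $N=\{v:\phi_v=\mathrm{neg}\}$, drop the players with $\phi_v\equiv1$, and reduce the inequality to
$$
\P_{\nu_S}\big(y^v=1\ \forall v\in P,\ y^v=0\ \forall v\in N\big)\ \le\ \P_{\nu_S}\big(y^v=1\ \forall v\in P\cup N\big).
$$
For $\epsilon\le 1/2$ the law of the one-bit tree process is $\propto\prod_{(u,w)\in E}\psi(Y^u,Y^w)$ with $\psi$ favouring agreement, i.e.\ a ferromagnetic Ising model on $T$; it is positively associated (FKG), and hence so is its marginal $\nu_S$. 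Applying positive association to the decreasing event $\{y^v=0\ \forall v\in N\}$ against the increasing event $\{y^v=1\ \forall v\in P\}$, then the spin-flip symmetry $\P_{\nu_S}(y^v=0\ \forall v\in N)=\P_{\nu_S}(y^v=1\ \forall v\in N)$, then positive association of the two increasing events $\{y^v=1\ \forall v\in P\}$ and $\{y^v=1\ \forall v\in N\}$, gives the displayed inequality. (Alternatively one proves it directly by induction on $T$, peeling off leaves, as in \cite{MODRSS06}.) Integrating over $\mathbf Y^S_{-i}$ yields the key claim.

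Finally I would iterate and check termination. Starting from an optimal tuple $(f_v^\ast)_{v\in S}$ of balanced Boolean functions, apply $M_1,\dots,M_n,M_1,\dots$ cyclically; by the key claim the objective never decreases, and the tuple stays balanced and Boolean. The integer-valued potential $\Psi\big((f_v)_v\big)=\sum_{v\in S}\sum_x|x|\,f_v(x)$ never decreases under an $M_i$ and strictly increases whenever some $M_i$ alters some $f_v$ (each altered line raises a support point's Hamming weight by exactly $1$); as $\Psi$ is bounded, the process stabilizes. A stable tuple has $M_if_v=f_v$ for all $i,v$, i.e.\ no $f_v$ has, in any direction, a line with value $1$ at $x_i=0$ and $0$ at $x_i=1$---precisely monotonicity. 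Since the objective only rose from its maximum, the stable tuple is an optimal tuple of monotone balanced Boolean functions, proving the theorem. The crux---where the tree geometry and the hypothesis $\epsilon\le1/2$ enter---is the positive-association inequality for $\nu_S$; the conditioning reduction and the termination argument are routine.
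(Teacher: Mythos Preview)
Your argument is correct. The paper omits the proof entirely, deferring to \cite{MODRSS06} (Theorem~6.3), whose method is precisely the simultaneous coordinate-wise compression you carry out; your appeal to FKG/positive association of the one-bit ferromagnetic tree process is a clean way to establish the key correlation inequality (the leaf-peeling induction you mention parenthetically is closer to what \cite{MODRSS06} actually does). One cosmetic quibble: $M_i$ is not a ``bijection of $\{0,1\}^n$'' but a cardinality-preserving map on subsets (equivalently, a mean-preserving map on Boolean functions); the rest of your argument uses only this, so nothing is affected.
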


\section{Discussion}\label{sec:conclusion}
We investigate the noise stability of Boolean functions in various settings, such as functions defined on discrete cube, discrete torus, as well as in a tree model. Characterizations of extremal functions are given in different scenarios. Close connections with the problem of NICD and the conjecture of Courtade-Kumar on the most informative Boolean function are discussed. This paper significantly generalizes our earlier work \cite{LM18} with the focus on the discrete cube case. Regarding practical applications, our study of the discrete torus model is potentially useful for communications via low-noise channels with phase-shift keying (PSK) modulation. For example, our study of the discrete torus model captures the character of the $\ell$-PSK schemes with errors limited to a phase shift of $2\pi/\ell$ or $-2\pi/\ell$, say each with probability $\epsilon/2$, i.e., the errors remain closest to the original signal. Future work may consider general non-negative functions on discrete cube and Boolean functions on general product measure spaces. Analogous questions can be asked for general Markov semi-groups. Extension of the tree model in Section \ref{sec:extensions} to networks of general graphs is  interesting from both theoretical and practical perspectives. It might be worth to explore the connection between this $\alpha$-stability problem and Talagrand's convolution conjecture \cite{Tal89}.

\section*{Acknowledgment}

We would like to thank Alex Samorodnitsky for pointing out the reference \cite{ABCJN17}. We are indebted to Imre Leader for his clarification of results in \cite{BL91}. We thank Nathan Keller for pointing out that the noise operator preserves the monotonicity (Lemma \ref{lem:max-1}). We also appreciate the anonymous referee for pointing out several inaccuracies and many valuable comments. This work is supported by NSF grant CCF-1527270.

\end{document}